\newcommand{\arXiv}[1]{arXiv:\href{http://arXiv.org/abs/#1}{#1}}
\newcolumntype{A}{ >{$} r <{$} @{} >{${}} l <{$} }
\newtheorem{theorem}{Theorem}[section]
\newtheorem{conjecture}[theorem]{Conjecture}
\theoremstyle{definition}
\numberwithin{equation}{section}
\numberwithin{table}{section}
\numberwithin{figure}{section}
\newcommand{\R}{\mathbb{R}}
\newcommand{\F}{\mathbb{F}}
\title{Variations on five-dimensional sphere packings}
\author{Henry Cohn}
\address{Microsoft Research New England, One Memorial Drive, Cambridge, MA
02142, USA}
\curraddr{Department of Mathematics, Massachusetts Institute of Technology, Cambridge, MA}
\email{cohn@mit.edu}
\author{Isaac Rajagopal}
\address{Department of Mathematics, Massachusetts Institute of Technology, Cambridge, MA}
\email{isaacraj@mit.edu}
\begin{document}

\begin{abstract}
We analyze Sz\"oll\H{o}si's recent construction of a conjecturally optimal five-dimensional kissing configuration and produce a new such configuration, the fourth to be discovered. We construct five-dimensional sphere packings from these configurations, which augment Conway and Sloane's list of conjecturally optimal packings. We also construct a new kissing configuration in nine dimensions. None of these constructions improves on the known records, but they provide geometrically distinct constructions achieving these records.
\end{abstract}

\maketitle

\section{Introduction}

The sphere packing and kissing problems are two closely related problems in discrete geometry: the former asks how densely congruent spheres can be arranged in $\R^n$ with disjoint interiors, while the latter asks how many such spheres can be tangent to one central sphere (of the same size). Like many packing and coding problems, the answers are known only in relatively low dimensions, while the problems become increasingly mysterious in higher dimensions. The sphere packing problem has been solved in one through three, eight, and twenty-four dimensions \cite{thue1892,hales2005,flyspeck2017,viazovska2017,ckmrv2017}, while the kissing problem has been solved in one through four, eight, and twenty-four dimensions \cite{sw1953,musin2008,os1979,levenshtein1979}.

In dimensions where no proof is known, it is natural to wonder how confidently we can identify the optimal packings. Conway and Sloane \cite{cs1995} took a major step in this direction by classifying how packings can be obtained by stacking layers, and they gave a conjectural list of optimal sphere packings in dimensions up to nine. (Above nine dimensions, this approach fails, which illustrates the increasing complexity of the sphere packing problem.) Their list does not contain every possible densest packing, because one can always make local perturbations without changing the global density, but they conjectured that it contains all the ``tight'' packings. Kuperberg \cite{kuperberg2000} disproved their specific formulation of tightness and suggested a replacement in terms of weak recurrence (Conjecture~15 in \cite{kuperberg2000}). With this amended formulation, Conway and Sloane's list of packings seemed plausibly complete, as did the analogous classification of kissing configurations by Cohn, Jiao, Kumar, and Torquato \cite{cjkt2011}. However, in 2023 Sz\"oll\H{o}si \cite{szollosi2023} found a new five-dimensional kissing configuration, which could be extended to a new five-dimensional sphere packing (see the acknowledgements in \cite{szollosi2023}). This example showed that even in as few as five dimensions, our understanding of optimal sphere packings was incomplete.

In this paper, we put Sz\"oll\H{o}si's construction in a broader context. We construct another five-dimensional kissing configuration, which brings the list of known examples to four, and we extend these kissing configurations to a family of five-dimensional sphere packings. It is conceivable that our list completes Conway and Sloane's list in five dimensions, but it is difficult to rule out further surprises. The two most noteworthy sphere packings in the new family are uniform, which means their symmetry groups act transitively on the spheres. Only four such packings had previously been analyzed, namely the $D_5$ root lattice and three packings constructed by Leech \cite{leech1967}. We prove that these six uniform packings are the only uniform packings that contain the known kissing configurations locally. 

The unique uniform packing with Sz\"oll\H{o}si's kissing configuration was previously discovered by Andreanov and Kallus \cite{ak2020} as $J_{5,5}$ in their classification of dense $2$-periodic packings (i.e., unions of two translates of a lattice). In fact, their discovery predates Sz\"oll\H{o}si's, although they did not recognize that the kissing configuration was novel. The remaining uniform packing, with our new kissing configuration, is $4$-periodic. Viewing these uniform packings as $m$-periodic with $m$ small makes it easier to analyze their symmetry groups.

We have had no luck applying our techniques to six or seven dimensions. We expect that further kissing configurations and packings remain to be discovered in these dimensions, but doing so may require a new idea. Instead, we construct a new example in nine dimensions, which again does not set a new record but provides a geometrically distinct way to achieve the known record.

All of these constructions can be understood through modifying layers in packings. We will describe kissing configurations and packings in terms of the locations of the sphere centers. In other words, a kissing configuration consists of an arrangement of points on the surface of a sphere such that each pair of distinct points forms an angle of at least $\pi/3$ with the center of the sphere, and a sphere packing consists of an arrangement of points in Euclidean space with a certain minimal distance (namely, twice the sphere radius). Our altered configurations will remove the points on a hyperplane and replace them with modified points. The simplest case is the three-dimensional kissing configuration of the face-centered cubic lattice, in which it is well known that one can replace a triangle of points as shown in Figure~\ref{figure3d}.

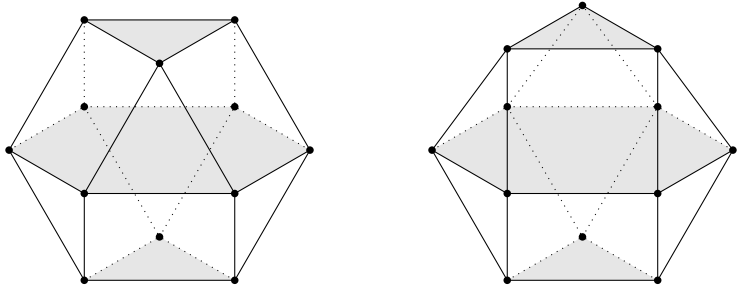
\begin{figure}
\centering
\begin{tikzpicture}
\fill[black!10!white] (1,1.7320508)--(-1,1.7320508)--(0,1.1547005)--cycle;
\fill[black!10!white] (-1,-1.7320508)--(1,-1.7320508)--(0,-1.1547005)--cycle;
\fill[black!10!white] (-2,0)--(-1,0.57735027)--(1,0.57735027)--(2,0)--(1,-0.57735027)--(-1,-0.57735027)--cycle;
\fill (1,1.7320508) circle (0.05);
\fill (-1,1.7320508) circle (0.05);
\fill (-2,0) circle (0.05);
\fill (-1,-1.7320508) circle (0.05);
\fill (1,-1.7320508) circle (0.05);
\fill (2,0) circle (0.05);
\fill (-1,0.57735027) circle (0.05);
\fill (0,-1.1547005) circle (0.05);
\fill (1,0.57735027) circle (0.05);
\fill (1,-0.57735027) circle (0.05);
\fill (0,1.1547005) circle (0.05);
\fill (-1,-0.57735027) circle (0.05);
\draw (1,1.7320508)--(-1,1.7320508);
\draw (1,1.7320508)--(2,0);
\draw[dotted] (1,1.7320508)--(1,0.57735027);
\draw (1,1.7320508)--(0,1.1547005);
\draw (-1,1.7320508)--(-2,0);
\draw[dotted] (-1,1.7320508)--(-1,0.57735027);
\draw (-1,1.7320508)--(0,1.1547005);
\draw (-2,0)--(-1,-1.7320508);
\draw[dotted] (-2,0)--(-1,0.57735027);
\draw (-2,0)--(-1,-0.57735027);
\draw (-1,-1.7320508)--(1,-1.7320508);
\draw[dotted] (-1,-1.7320508)--(0,-1.1547005);
\draw (-1,-1.7320508)--(-1,-0.57735027);
\draw (1,-1.7320508)--(2,0);
\draw[dotted] (1,-1.7320508)--(0,-1.1547005);
\draw (1,-1.7320508)--(1,-0.57735027);
\draw[dotted] (2,0)--(1,0.57735027);
\draw (2,0)--(1,-0.57735027);
\draw[dotted] (-1,0.57735027)--(0,-1.1547005);
\draw[dotted] (-1,0.57735027)--(1,0.57735027);
\draw[dotted] (0,-1.1547005)--(1,0.57735027);
\draw (1,-0.57735027)--(0,1.1547005);
\draw (1,-0.57735027)--(-1,-0.57735027);
\draw (0,1.1547005)--(-1,-0.57735027);
\end{tikzpicture}
\qquad \qquad
\begin{tikzpicture}
\fill[black!10!white] (-1,1.3471506)--(1,1.3471506)--(0,1.9245009)--cycle;
\fill[black!10!white] (-1,-1.7320508)--(1,-1.7320508)--(0,-1.1547005)--cycle;
\fill[black!10!white] (-2,0)--(-1,0.57735027)--(1,0.57735027)--(2,0)--(1,-0.57735027)--(-1,-0.57735027)--cycle;
\fill (-1,1.3471506) circle (0.05);
\fill (1,1.3471506) circle (0.05);
\fill (-2,0) circle (0.05);
\fill (-1,-1.7320508) circle (0.05);
\fill (1,-1.7320508) circle (0.05);
\fill (2,0) circle (0.05);
\fill (-1,0.57735027) circle (0.05);
\fill (0,-1.1547005) circle (0.05);
\fill (1,0.57735027) circle (0.05);
\fill (1,-0.57735027) circle (0.05);
\fill (0,1.9245009) circle (0.05);
\fill (-1,-0.57735027) circle (0.05);
\draw (-1,1.3471506)--(1,1.3471506);
\draw (-1,1.3471506)--(-2,0);
\draw (-1,1.3471506)--(0,1.9245009);
\draw (-1,1.3471506)--(-1,-0.57735027);
\draw (1,1.3471506)--(2,0);
\draw (1,1.3471506)--(1,-0.57735027);
\draw (1,1.3471506)--(0,1.9245009);
\draw (-2,0)--(-1,-1.7320508);
\draw[dotted] (-2,0)--(-1,0.57735027);
\draw (-2,0)--(-1,-0.57735027);
\draw (-1,-1.7320508)--(1,-1.7320508);
\draw[dotted] (-1,-1.7320508)--(0,-1.1547005);
\draw (-1,-1.7320508)--(-1,-0.57735027);
\draw (1,-1.7320508)--(2,0);
\draw[dotted] (1,-1.7320508)--(0,-1.1547005);
\draw (1,-1.7320508)--(1,-0.57735027);
\draw[dotted] (2,0)--(1,0.57735027);
\draw (2,0)--(1,-0.57735027);
\draw[dotted] (-1,0.57735027)--(0,-1.1547005);
\draw[dotted] (-1,0.57735027)--(1,0.57735027);
\draw[dotted] (-1,0.57735027)--(0,1.9245009);
\draw[dotted] (0,-1.1547005)--(1,0.57735027);
\draw[dotted] (1,0.57735027)--(0,1.9245009);
\draw (1,-0.57735027)--(-1,-0.57735027);
\end{tikzpicture}
\caption{The centers of the spheres in the face-centered cubic kissing arrangement on the left, and the centers of the spheres in the hexagonal close packing kissing arrangement on the right. To produce the arrangement on the right from the one on the left, the top layer is deleted and the bottom layer is reflected across the central layer (the hexagon).}\label{figure3d}
\end{figure}

In the remainder of this paper, we analyze the five-dimensional kissing configurations and construct our new configuration (Section~\ref{sec:5dkissing}), examine what goes wrong in six dimensions (Section~\ref{sec:6dkissing}), extend these kissing configurations to sphere packings (Section~\ref{sec:5dpacking}), and conclude with our construction in nine dimensions (Section~\ref{sec:9dkissing}). We also provide computer code for verifying certain assertions through DSpace@MIT at \url{https://hdl.handle.net/1721.1/157699}.

\section{Five-dimensional kissing configurations}
\label{sec:5dkissing}

The kissing number in five dimensions appears to be $40$, although the best upper bound that has been proved is~$44$ (from \cite{mv2010}). The first construction achieving~$40$ is implicit in Korkine and Zolotareff's 1873 paper \cite{kz1873}, where they constructed the $D_5$ root lattice. Its root system achieves a kissing number of~$40$ as the permutations of the points $(\pm 1, \pm 1, 0, 0, 0)$; these points form a kissing configuration because they each have squared norm~$2$ and the inner product between distinct points is always at most~$1$.

In 1967, Leech \cite{leech1967} constructed a different kissing configuration of the same size, not isometric to the $D_5$ root system. He split $D_5$ into four-dimensional layers based on the fifth coordinate: there are $8$ points with fifth coordinate~$1$, $24$~with~$0$, and $8$~with~$-1$. The top layer consists of the points
\[
(\overbrace{\pm 1, 0, 0, 0}^{\text{permute}},1),
\]
and Leech observed that replacing it with
\[
(\overbrace{\pm \tfrac{1}{2}, \pm \tfrac{1}{2}, \pm \tfrac{1}{2}, \pm \tfrac{1}{2}}^{\text{odd \# of $-$ signs}},1)
\]
yields a different kissing configuration $L_5$, which breaks the reflection symmetry across the central hyperplane with fifth coordinate~$0$. One could also use an even number of minus signs, or replace both the top and bottom layers, but these modifications are all isometric to $D_5$ or $L_5$. For comparison, such a construction cannot be carried out in four dimensions, since the optimal kissing configuration is unique in four dimensions \cite{dLLdMK2024}.

We can interpret Leech's construction in terms of filling holes. The central cross section is the $D_4$ root system
\[
(\overbrace{\pm 1, \pm 1, 0, 0}^{\text{permute}},0)
\]
or equivalently the vertices of a regular $24$-cell. The holes in this spherical code form the dual $24$-cell, which is the union of three cross polytopes, namely
\[
(\overbrace{\pm 1, 0, 0, 0}^{\text{permute}},0), \quad (\overbrace{\pm \tfrac{1}{2}, \pm \tfrac{1}{2}, \pm \tfrac{1}{2}, \pm \tfrac{1}{2}}^{\text{odd \# of $-$ signs}},0), \quad \text{and} \quad (\overbrace{\pm \tfrac{1}{2}, \pm \tfrac{1}{2}, \pm \tfrac{1}{2}, \pm \tfrac{1}{2}}^{\text{even \# of $-$ signs}},0).
\]
The other two layers lie directly above or below some of these points, so that they are nestled into the holes in the central cross section. The triality symmetry of the $24$-cell, i.e., the orthogonal transformation
\[
\frac{1}{2}\begin{pmatrix}1 & 1 & 1 & 1\\
1 & 1 & -1 & -1\\
1 & -1 & 1 & -1\\
-1 & 1 & 1 & -1
\end{pmatrix},
\]
cyclically permutes these cross polytopes in four dimensions, so they all play the same role. Up to isometry, there are only two kissing configurations that can be obtained in this way: $D_5$, which has reflection symmetry across the hyperplane, and $L_5$, which does not. One can see that they are distinct by counting antipodal points: $D_5$ is closed under multiplication by $-1$, while $L_5$ is not. Instead, only $24$ of the points in $L_5$ come in antipodal pairs, namely those in $D_4$.

In 2023, Sz\"oll\H{o}si \cite{szollosi2023} found a new kissing configuration $Q_5$ via a computer search. It can be described similarly to $L_5$, but using different cross sections: the $D_5$ root system contains $20$ vectors with a coordinate sum of $0$ and $10$ each with coordinate sums of $\pm 2$. Here the central cross section is the $A_4$ root system. This cross section is suboptimal as a four-dimensional kissing configuration, but it nevertheless extends to a seemingly optimal configuration in five dimensions. The layer with coordinate sum~$2$ consists of the antipodes of the points with coordinate sum~$-2$, but it is not the reflection of that layer across the hyperplane with coordinate sum~$0$. Reflection across this hyperplane consists of the map
\begin{equation}\label{eqreflect}
    v \mapsto v - 2 \langle v,w\rangle w,
\end{equation}
where $w = (1,1,1,1,1)/\sqrt{5}$ and $\langle \cdot,\cdot\rangle$ denotes the usual inner product. Equivalently, when applied to a vector with coordinate sum $s$, the reflection subtracts $2s/5$ from each coordinate. The $Q_5$ kissing configuration replaces the layer that has coordinate sum~$2$ with the reflection of the layer with coordinate sum~$-2$. Only $20$ points in $Q_5$ come in antipodal pairs, namely those in $A_4$, and so it cannot be isometric to $D_5$ or $L_5$.

Our new configuration, which we call $R_5$, can be obtained as follows by modifying $L_5$. The $L_5$ configuration behaves similarly to $D_5$ with respect to coordinate sums: there are $20$ points with coordinate sum $0$ and $10$ each with $\pm 2$. The central cross section is no longer the $A_4$ root system, but rather a modification that replaces four adjacent points with a different regular tetrahedron. Again we can replace one of the adjacent layers with the other's reflection, to obtain $R_5$ from $L_5$. Only $12$ of the points in $R_5$ come in antipodal pairs, and so it cannot be isometric to any of the three other configurations. This yields the following theorem:

\begin{theorem}
There are at least four non-isometric kissing configurations of $40$ points in five dimensions, namely $D_5$, $L_5$, $Q_5$, and $R_5$.
\end{theorem}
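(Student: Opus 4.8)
The plan is to verify two separate assertions: that each of the four point sets is a genuine kissing configuration of $40$ points in $\R^5$, and that no two of them are isometric. Validity is classical for the root system $D_5$, for Leech's configuration $L_5$, and for Sz\"oll\H{o}si's configuration $Q_5$, so the only construction that needs to be checked from scratch is $R_5$; the non-isometry will then follow from a single invariant, namely the number of points that lie in an antipodal pair.

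To confirm that $R_5$ is a kissing configuration, I would record its $40$ points as three layers organized by coordinate sum: the ten points of $L_5$ with coordinate sum $-2$, the twenty points of $L_5$ with coordinate sum $0$, and the ten points obtained by applying the reflection~\eqref{eqreflect} to the sum-$(-2)$ layer, which land in the sum-$2$ slab. All of these points have squared norm $2$ because the reflection is an isometry, and the three layers are disjoint because their coordinate sums differ, so $R_5$ is a set of $40$ distinct points on the sphere of radius $\sqrt2$; the kissing condition is then exactly that $\langle u,v\rangle\le 1$ for all distinct $u,v\in R_5$. Writing $\rho$ for the reflection~\eqref{eqreflect}, which fixes the hyperplane of coordinate sum $0$ pointwise, the inner products within the reflected layer and between the reflected layer and the central layer agree with the corresponding inner products in the valid configuration $L_5$ and are therefore at most $1$, while every other inner product except those between the sum-$(-2)$ layer and the reflected sum-$2$ layer is inherited unchanged from $L_5$. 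Thus the only genuinely new quantities are the hundred values $\langle v,\rho(v')\rangle$ with $v,v'$ in the sum-$(-2)$ layer, and checking that each of these is at most $1$ is the step I expect to be the main obstacle. It is a finite computation, which can be shortened using the symmetry permuting the first four coordinates and then confirmed with the accompanying code.

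For the non-isometry, I would first observe that each of the four configurations contains six affinely independent points, so it does not lie in a hyperplane and its circumscribed sphere, hence the origin, is determined by the point set. Any isometry carrying one configuration to another must therefore fix the origin and be an orthogonal linear map, and such a map commutes with $v\mapsto -v$; consequently the number of points belonging to an antipodal pair is an isometric invariant. This number is $40$ for $D_5$, since the root system is closed under negation. It is $24$ for $L_5$: the $D_4$ layer with fifth coordinate $0$ is closed under negation, while negation sends the half-integer top layer to half-integer points of fifth coordinate $-1$, none of which lie in the integer-valued bottom layer. It is $20$ for $Q_5$, where only the central $A_4$ cross section is antipodally symmetric. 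Finally it is $12$ for $R_5$: among its three layers only the twelve central points that are permutations of $(1,-1,0,0,0)$ occur together with their negatives, and the reflected sum-$2$ layer is not the negation of the sum-$(-2)$ layer because the reflection~\eqref{eqreflect} negates only the component along $w$ rather than the entire vector. Since $40$, $24$, $20$, and $12$ are distinct, the four configurations are pairwise non-isometric, completing the proof.
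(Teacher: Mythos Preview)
Your proposal is correct and follows essentially the same approach as the paper: construct $R_5$ by reflecting the coordinate-sum $-2$ layer of $L_5$ across the hyperplane of coordinate sum~$0$, and distinguish the four configurations by the number of points lying in antipodal pairs ($40$, $24$, $20$, $12$). Your layer-by-layer reduction of the validity check for $R_5$ and your explicit justification that the antipodal count is an isometry invariant (via the circumscribed sphere) add detail the paper leaves implicit, but the core argument is the same.
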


\begin{table}
\caption{The inner products that occur in each of the five-dimensional kissing configurations we study. The column labeled $t$ lists the number of unordered pairs of distinct points with inner product $t$, if the points are all normalized to be unit vectors.}
\label{5dinnerproducts}
\centering
\begin{tabular}{cccccccccc}
\toprule
Configuration & $-1$ & $-4/5$ & $-3/4$ & $-1/2$ & $-3/10$ & $-1/4$ & $0$ & $1/5$ & $1/2$\\
\midrule
$D_5$ & $20$ & $0$ & $0$ & $240$ & $0$ & $0$ & $280$ & $0$ & $240$\\
$L_5$ & $12$ & $0$ & $32$ & $192$ & $0$ & $32$ & $272$ & $0$ & $240$\\
$Q_5$ & $10$ & $30$ & $0$ & $180$ & $60$ & $0$ & $250$ & $10$ & $240$\\
$R_5$ & $6$ & $30$ & $20$ & $144$ & $60$ & $28$ & $242$ & $10$ & $240$\\
\bottomrule
\end{tabular}
\end{table}

\begin{table}
\caption{The four five-dimensional kissing configurations with $40$ points. The subdivisions in the list are $D_5$, $L_5$, $Q_5$, and $R_5$, respectively, and the points with fractional coordinates are the only ones that differ from those in $D_5$.}
\label{5dkissing}
\centering
\begin{tabular}{cccc}
\toprule
$\scriptstyle (1,0,0,0,1)$ & $\scriptstyle (-1,0,0,0,1)$ & $\scriptstyle (1,0,0,0,-1)$ & $\scriptstyle (-1,0,0,0,-1)$ \\
$\scriptstyle (0,1,0,0,1)$ & $\scriptstyle (0,-1,0,0,1)$ & $\scriptstyle (0,1,0,0,-1)$ & $\scriptstyle (0,-1,0,0,-1)$ \\
$\scriptstyle (0,0,1,0,1)$ & $\scriptstyle (0,0,-1,0,1)$ & $\scriptstyle (0,0,1,0,-1)$ & $\scriptstyle (0,0,-1,0,-1)$ \\
$\scriptstyle (0,0,0,1,1)$ & $\scriptstyle (0,0,0,-1,1)$ & $\scriptstyle (0,0,0,1,-1)$ & $\scriptstyle (0,0,0,-1,-1)$ \\
$\scriptstyle {(1,1,0,0,0)}$ & $\scriptstyle (-1,1,0,0,0)$ & $\scriptstyle (1,-1,0,0,0)$ & $\scriptstyle (-1,-1,0,0,0)$ \\
$\scriptstyle {(1,0,1,0,0)}$ & $\scriptstyle (-1,0,1,0,0)$ & $\scriptstyle (1,0,-1,0,0)$ & $\scriptstyle (-1,0,-1,0,0)$ \\
$\scriptstyle (1,0,0,1,0)$ & $\scriptstyle (-1,0,0,1,0)$ & $\scriptstyle (1,0,0,-1,0)$ & $\scriptstyle (-1,0,0,-1,0)$ \\
$\scriptstyle (0,1,1,0,0)$ & $\scriptstyle (0,-1,1,0,0)$ & $\scriptstyle (0,1,-1,0,0)$ & $\scriptstyle (0,-1,-1,0,0)$ \\
$\scriptstyle (0,1,0,1,0)$ & $\scriptstyle (0,-1,0,1,0)$ & $\scriptstyle (0,1,0,-1,0)$ & $\scriptstyle (0,-1,0,-1,0)$ \\
$\scriptstyle (0,0,1,1,0)$ & $\scriptstyle (0,0,-1,1,0)$ & $\scriptstyle (0,0,1,-1,0)$ & $\scriptstyle (0,0,-1,-1,0)$ \\
\midrule
$\scriptstyle {(-0.5,0.5,0.5,0.5,1)}$ & $\scriptstyle {(0.5,-0.5,-0.5,-0.5,1)}$ & $\scriptstyle (1,0,0,0,-1)$ & $\scriptstyle (-1,0,0,0,-1)$ \\
$\scriptstyle {(0.5,-0.5,0.5,0.5,1)}$ & $\scriptstyle {(-0.5,0.5,-0.5,-0.5,1)}$ & $\scriptstyle (0,1,0,0,-1)$ & $\scriptstyle (0,-1,0,0,-1)$ \\
$\scriptstyle {(0.5,0.5,-0.5,0.5,1)}$ & $\scriptstyle {(-0.5,-0.5,0.5,-0.5,1)}$ & $\scriptstyle (0,0,1,0,-1)$ & $\scriptstyle (0,0,-1,0,-1)$ \\
$\scriptstyle {(0.5,0.5,0.5,-0.5,1)}$ & $\scriptstyle {(-0.5,-0.5,-0.5,0.5,1)}$ & $\scriptstyle (0,0,0,1,-1)$ & $\scriptstyle (0,0,0,-1,-1)$ \\
$\scriptstyle {(1,1,0,0,0)}$ & $\scriptstyle (-1,1,0,0,0)$ & $\scriptstyle (1,-1,0,0,0)$ & $\scriptstyle (-1,-1,0,0,0)$ \\
$\scriptstyle (1,0,1,0,0)$ & $\scriptstyle (-1,0,1,0,0)$ & $\scriptstyle (1,0,-1,0,0)$ & $\scriptstyle (-1,0,-1,0,0)$ \\
$\scriptstyle (1,0,0,1,0)$ & $\scriptstyle (-1,0,0,1,0)$ & $\scriptstyle (1,0,0,-1,0)$ & $\scriptstyle (-1,0,0,-1,0)$ \\
$\scriptstyle (0,1,1,0,0)$ & $\scriptstyle (0,-1,1,0,0)$ & $\scriptstyle (0,1,-1,0,0)$ & $\scriptstyle (0,-1,-1,0,0)$ \\
$\scriptstyle (0,1,0,1,0)$ & $\scriptstyle (0,-1,0,1,0)$ & $\scriptstyle (0,1,0,-1,0)$ & $\scriptstyle (0,-1,0,-1,0)$ \\
$\scriptstyle (0,0,1,1,0)$ & $\scriptstyle (0,0,-1,1,0)$ & $\scriptstyle (0,0,1,-1,0)$ & $\scriptstyle (0,0,-1,-1,0)$ \\
\midrule
$\scriptstyle {{(-0.2,0.8,0.8,0.8,-0.2)}}$ & $\scriptstyle (-1,0,0,0,1)$ & $\scriptstyle (1,0,0,0,-1)$ & $\scriptstyle (-1,0,0,0,-1)$ \\
$\scriptstyle {{(0.8,-0.2,0.8,0.8,-0.2)}}$ & $\scriptstyle (0,-1,0,0,1)$ & $\scriptstyle (0,1,0,0,-1)$ & $\scriptstyle (0,-1,0,0,-1)$ \\
$\scriptstyle {(0.8,0.8,-0.2,0.8,-0.2)}$ & $\scriptstyle (0,0,-1,0,1)$ & $\scriptstyle (0,0,1,0,-1)$ & $\scriptstyle (0,0,-1,0,-1)$ \\
$\scriptstyle {(0.8,0.8,0.8,-0.2,-0.2)}$ & $\scriptstyle (0,0,0,-1,1)$ & $\scriptstyle (0,0,0,1,-1)$ & $\scriptstyle (0,0,0,-1,-1)$ \\
$\scriptstyle {(-0.2,-0.2,0.8,0.8,0.8)}$ & $\scriptstyle {(-1,1,0,0,0)}$ & $\scriptstyle (1,-1,0,0,0)$ & $\scriptstyle (-1,-1,0,0,0)$ \\
$\scriptstyle {(-0.2,0.8,-0.2,0.8,0.8)}$ & $\scriptstyle (-1,0,1,0,0)$ & $\scriptstyle (1,0,-1,0,0)$ & $\scriptstyle (-1,0,-1,0,0)$ \\
$\scriptstyle {(-0.2,0.8,0.8,-0.2,0.8)}$ & $\scriptstyle (-1,0,0,1,0)$ & $\scriptstyle (1,0,0,-1,0)$ & $\scriptstyle (-1,0,0,-1,0)$ \\
$\scriptstyle {(0.8,-0.2,-0.2,0.8,0.8)}$ & $\scriptstyle (0,-1,1,0,0)$ & $\scriptstyle (0,1,-1,0,0)$ & $\scriptstyle (0,-1,-1,0,0)$ \\
$\scriptstyle {(0.8,-0.2,0.8,-0.2,0.8)}$ & $\scriptstyle (0,-1,0,1,0)$ & $\scriptstyle (0,1,0,-1,0)$ & $\scriptstyle (0,-1,0,-1,0)$ \\
$\scriptstyle {(0.8,0.8,-0.2,-0.2,0.8)}$ & $\scriptstyle (0,0,-1,1,0)$ & $\scriptstyle (0,0,1,-1,0)$ & $\scriptstyle (0,0,-1,-1,0)$ \\ 
\midrule
$\scriptstyle {(-0.2,0.8,0.8,0.8,-0.2)}$ & $\scriptstyle {{(0.5,-0.5,-0.5,-0.5,1)}}$ & $\scriptstyle (1,0,0,0,-1)$ & $\scriptstyle (-1,0,0,0,-1)$ \\
$\scriptstyle {(0.8,-0.2,0.8,0.8,-0.2)}$ & $\scriptstyle {(-0.5,0.5,-0.5,-0.5,1)}$ & $\scriptstyle (0,1,0,0,-1)$ & $\scriptstyle (0,-1,0,0,-1)$ \\
$\scriptstyle {(0.8,0.8,-0.2,0.8,-0.2)}$ & $\scriptstyle {(-0.5,-0.5,0.5,-0.5,1)}$ & $\scriptstyle (0,0,1,0,-1)$ & $\scriptstyle (0,0,-1,0,-1)$ \\
$\scriptstyle {(0.8,0.8,0.8,-0.2,-0.2)}$ & $\scriptstyle {(-0.5,-0.5,-0.5,0.5,1)} $ & $\scriptstyle (0,0,0,1,-1)$ & $\scriptstyle (0,0,0,-1,-1)$ \\
$\scriptstyle {(-0.2,-0.2,0.8,0.8,0.8)}$ & $\scriptstyle (-1,1,0,0,0)$ & $\scriptstyle (1,-1,0,0,0)$ & $\scriptstyle (-1,-1,0,0,0)$ \\
$\scriptstyle {(-0.2,0.8,-0.2,0.8,0.8)}$ & $\scriptstyle (-1,0,1,0,0)$ & $\scriptstyle (1,0,-1,0,0)$ & $\scriptstyle (-1,0,-1,0,0)$ \\
$\scriptstyle {(-0.2,0.8,0.8,-0.2,0.8)}$ & $\scriptstyle (-1,0,0,1,0)$ & $\scriptstyle (1,0,0,-1,0)$ & $\scriptstyle (-1,0,0,-1,0)$ \\
$\scriptstyle {(0.8,-0.2,-0.2,0.8,0.8)}$ & $\scriptstyle (0,-1,1,0,0)$ & $\scriptstyle (0,1,-1,0,0)$ & $\scriptstyle (0,-1,-1,0,0)$ \\
$\scriptstyle {(0.8,-0.2,0.8,-0.2,0.8)}$ & $\scriptstyle (0,-1,0,1,0)$ & $\scriptstyle (0,1,0,-1,0)$ & $\scriptstyle (0,-1,0,-1,0)$ \\
$\scriptstyle {{(0.8,0.8,-0.2,-0.2,0.8)}}$ & $\scriptstyle (0,0,-1,1,0)$ & $\scriptstyle (0,0,1,-1,0)$ & $\scriptstyle (0,0,-1,-1,0)$ \\
\bottomrule
\end{tabular}
\end{table}

Table~\ref{5dinnerproducts} lists the inner products that occur in these configurations and their multiplicities, while Table~\ref{5dkissing} lists coordinates for the points themselves. The symmetry groups of $D_5$, $L_5$, $Q_5$, and $R_5$ have orders $3840$, $384$, $240$, and $48$, respectively. Specifically, the symmetries of $D_5$ are the signed permutations of the coordinates (equivalently, the $1920$ elements of the $D_5$ Weyl group times two Dynkin diagram automorphisms), and those of $L_5$ are the signed permutations that fix the fifth coordinate. The symmetries of $Q_5$ are generated by the permutations of the coordinates and reflection across the hyperplane, and the symmetries of $R_5$ are generated by the permutations of the coordinates that fix the fifth coordinate and reflection across the hyperplane.

\section{Six-dimensional kissing configurations}
\label{sec:6dkissing}

The $D_5$ and $L_5$ kissing configurations extend to seemingly optimal six-dimensional kissing configurations of $72$ points, but we have been unable to extend $Q_5$ and $R_5$. To see what goes wrong, we begin by reviewing what happens with $D_5$ and $L_5$. 

There are $32$ deep holes in the $D_5$ kissing configuration, namely the points
\[
(\pm \sqrt{2/5}, \dots, \pm \sqrt{2/5})
\]
if we use vectors with squared norm $2$ as above. These holes have inner product at most $2\sqrt{2/5}$ with the points in the $D_5$ kissing configuration, and no other points can be so far away from $D_5$. We can partition them into two subsets $S_\text{even}$ and $S_\text{odd}$ of size $16$, namely those with an even or odd number of minus signs, so that the maximal inner product within each of these subsets is $2/5$. One can check that the $E_6$ kissing configuration is the union
\[
(\sqrt{5/8} \,S_{\text{even}} \times \{-\sqrt{3}/2\}) \cup (D_5 \times \{0\}) \cup (\sqrt{5/8}\,S_{\text{odd}} \times \{\sqrt{3}/2\}),
\]
which has $D_5$ as a central cross section and two parallel layers with $16$ points nestled into the deep holes. If instead we form the union
\[
(\sqrt{5/8} \,S_{\text{even}} \times \{-\sqrt{3}/2\}) \cup (D_5 \times \{0\}) \cup (\sqrt{5/8}\,S_{\text{even}} \times \{\sqrt{3}/2\}),
\]
which has reflection symmetry across the central hyperplane, then we obtain a kissing configuration discovered by Leech \cite{leech1969}. (The other two possibilities, odd-even and odd-odd, are isometric to these cases.)

A similar construction works if we start instead with $L_5$. For the construction of $L_5$ from the previous section, the deep holes with negative fifth coordinate are the same as in $D_5$, while those with positive fifth coordinate are
\[
\sqrt{\frac{2}{5}}\,(\overbrace{\pm 2, 0, 0, 0}^{\text{permute}},1) \qquad\text{and}\qquad
\sqrt{\frac{2}{5}}\,(\overbrace{\pm 1, \pm 1, \pm 1, \pm 1}^{\text{even \# of $-$ signs}},1).
\]
Again we can partition these holes into two $16$-element subsets with maximal inner product $2/5$ within each subset, with one subset being the same set $S_{\text{even}}$ from the $D_5$ case and the other being its complement. Now, however, there is also a third $16$-element subset with maximal inner product $2/5$, which combines the points
\[
\sqrt{\frac{2}{5}}\,(\overbrace{\pm 2, 0, 0, 0}^{\text{permute}},1)
\]
with the eight elements of $S_{\text{even}}$ that have a negative fifth coordinate. We can imitate the $D_5$ construction by using these $16$-element sets to create parallel layers. If we choose the two complementary sets, we obtain Leech's six-dimensional kissing configuration again. The other cases yield two distinct six-dimensional kissing configurations, which were discovered by Conway and Sloane \cite{cs1995}. Specifically, one of these configurations arises only if we use the third $16$-element subset on both sides of $L_5$, and all the remaining cases give the other configuration.

If we start with $Q_5$ or $R_5$, it initially seems that the same approach might produce an extension to six dimensions, but it cannot be fully carried out. These configurations again have $32$ deep holes: those on one side of the hyperplane with coordinate sum~$0$ are the same as in $D_5$ or $L_5$, respectively, while those on the other side are the reflections across this hyperplane. However, continuing the construction requires a set of $16$ holes with maximal inner product $2/5$ between them, and one can check by depth-first search that no such set exists. We have not ruled out the possibility that some other construction might give a six-dimensional kissing configuration with $Q_5$ or $R_5$ as a cross section, but it seems unlikely:

\begin{conjecture}
No six-dimensional kissing configuration with $Q_5$ or $R_5$ as a cross section can contain $72$ or more points.
\end{conjecture}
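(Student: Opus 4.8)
The plan is to fix a putative six-dimensional kissing configuration $K$ of unit vectors with $\langle p, p'\rangle \le 1/2$ for all distinct $p, p' \in K$, and with equatorial cross section equal to $C \in \{Q_5, R_5\}$; that is, $C \subset w^\perp$ for some unit vector $w$ and $C = \{p \in K : \langle p, w\rangle = 0\}$. Since $C$ already has $40$ points (the conjectured five-dimensional kissing number) and $w^\perp$ is five-dimensional, the equatorial layer cannot be enlarged, so it suffices to show that $K$ has at most $31$ points off the equator. Writing each such point as $p = (q, c)$ with $q \in w^\perp$ and $c = \langle p, w\rangle \ne 0$, two families of constraints apply: the \emph{layer constraints} $\langle q, x\rangle \le 1/2$ for all $x \in C$ (because $C$ lies in the equator), and the \emph{mutual constraints} $\langle q, q'\rangle + c c' \le 1/2$ between distinct off-equator points $(q, c)$ and $(q', c')$. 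The layer constraints confine every projection $q$ to the polytope $P = \{q \in w^\perp : \langle q, x\rangle \le 1/2 \text{ for all } x \in C\}$, whose facets are indexed by the points of $C$ and whose farthest vertices point toward the deep holes of $C$ analyzed above.

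Because we need only an upper bound on the off-equator count, it suffices to bound the two open half-spheres separately and discard the mutual constraints linking opposite sides (for such a pair the cross term $cc'$ is negative, so these constraints can only lower the count). Moreover the reflection $w \mapsto -w$ fixes $C$, so the two sides pose identical problems. I would therefore reduce to the \emph{one-sided problem}: bound the number of unit vectors $(q_i, c_i)$ with $c_i > 0$ whose projections lie in $P$ and which satisfy $\langle q_i, q_j\rangle + c_i c_j \le 1/2$. The target is to show this number is at most $15$ for $C \in \{Q_5, R_5\}$, giving at most $30$ off-equator points in total. The depth-first search already described handles the special case in which a side is a single layer of deep holes (normalized inner product $\le 2/5$) and shows that even $16$ such holes do not exist; the content of the conjecture is to upgrade this from deep holes at a common height to arbitrary positions in $P$ at arbitrary heights.

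To make the one-sided bound rigorous I would combine three ingredients. First, a \emph{rigidity} step: show that packing as many points as possible into a half-sphere with projections constrained to $P$ forces each point toward an extreme ray of $P$ (a deep hole) at a common maximal height. The mechanism is that raising a height $c_i$ loosens that point's layer constraint but tightens every mutual constraint through the growing term $c_i c_j$, so an extremal family is pinned. Second, a \emph{linear programming} step to bound the admissible heights and to certify that intermediate positions cannot help, using the inner-product spectrum of $C$ from Table~\ref{5dinnerproducts} together with the facet structure of $P$. Third, an \emph{exhaustive} step: once the candidate projections are reduced to a finite set (the deep holes plus a controlled neighborhood of near-holes), enumerate cliques in the associated compatibility graph, exploiting the symmetry group of $C$ (of order $240$ for $Q_5$ and $48$ for $R_5$) to restrict to orbit representatives, and verify that no admissible family reaches $16$.

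The hard part will be the first step. The linear and semidefinite programming bounds available in six dimensions do not reach $72$ on their own, so the argument genuinely needs the structural input that $C$ is fixed in the equator; but the heights $c_i$ vary continuously, and nothing \emph{a priori} prevents off-equator points from sitting at intermediate heights with projections strictly inside $P$ rather than at deep holes. Ruling out such non-rigid configurations -- proving that the extremal one-sided packing is attained only at deep holes of a common height, so that the problem collapses to the finite search already performed -- is exactly the gap between the depth-first verification and a complete proof, and is why we can only record the statement as a conjecture.
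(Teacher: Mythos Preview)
The statement is recorded in the paper as a \emph{conjecture}, not a theorem; the paper offers no proof and explicitly says ``we have not ruled out the possibility that some other construction might give a six-dimensional kissing configuration with $Q_5$ or $R_5$ as a cross section.'' The only evidence the authors give is the depth-first search showing that the $32$ deep holes of $Q_5$ (respectively $R_5$) contain no $16$-element subset with pairwise inner product at most $2/5$. Your proposal is consistent with this: you reproduce the deep-hole obstruction, sketch a three-step programme (rigidity, linear programming, finite enumeration) for extending it to off-equator points at arbitrary heights, and then explicitly identify the rigidity step as the unfilled gap, concluding that the statement must remain a conjecture. That conclusion agrees with the paper.

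Two minor remarks on the setup. First, your appeal to ``the conjectured five-dimensional kissing number'' to argue that the equatorial layer cannot be enlarged is unnecessary: you have already defined $C$ as the full slice $\{p \in K : \langle p,w\rangle = 0\}$ and taken it to equal $Q_5$ or $R_5$, so by hypothesis there is nothing to enlarge. Second, discarding the cross-side constraints and bounding each half-sphere separately by $15$ is valid for an upper bound but slightly stronger than required (a total of $31$ off-equator points would already force $|K|\le 71$); since you are not claiming a proof, this does not affect anything.
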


The existence of $Q_5$ and $R_5$ suggests that analogous kissing configurations may also exist in six or seven dimensions,\footnote{The eight-dimensional kissing problem has a unique solution \cite{bannaisloane1981}.} but we have had no success in finding such configurations. While there are various ways to divide six- or seven-dimensional configurations into layers, we have not found any sufficiently close analogues of the $A_4$ cross section of $D_5$ that leads to $Q_5$. Perhaps this cross section is a special phenomenon in five dimensions, or perhaps some new insight is needed for a generalization.

\section{From kissing configurations to sphere packings}
\label{sec:5dpacking}

To extend these kissing configurations to Euclidean sphere packings, we will imitate Conway and Sloane's construction of tight packings \cite{cs1995}. Their five-dimensional packings were all formed by stacking four-dimensional layers, specifically translates of the $D_4$ root lattice. Instead, we will stack three-dimensional layers given by translates of the $D_3$ root lattice.

\subsection{Packings that fiber over $D_3$}\label{section4.1}

Recall that $D_3$ consists of the points $(x,y,z)$ for which $x$, $y$, and $z$ are integers with $x+y+z$ even. Because $D_3$ is an integral lattice, it is a sublattice of its dual lattice $D_3^*$, which consists of the translates of $D_3$ by the vectors
\begin{align*}
t_0 &= (0,0,0),\\
t_1 &= (\tfrac{1}{2},\tfrac{1}{2},\tfrac{1}{2}),\\
t_2 &= (0,0,1), \text{ and}\\
t_3 &= (\tfrac{1}{2},\tfrac{1}{2},-\tfrac{1}{2}).
\end{align*}
In other words, $D_3+t_0$ is the $D_3$ root lattice itself, $D_3+t_2$ consists of the deep holes in the lattice (the points furthest from the lattice, with distance $1$), and the remaining two translates consist of the shallow holes (local but not global maxima for distance from the lattice, with distance $\sqrt{3}/2$).

To form a five-dimensional packing, we will use a \emph{four-colored point configuration} in $\R^2$, by which we mean an infinite discrete point set in $\R^2$ in which each point is labeled with the color $0$, $1$, $2$, or $3$. We call two colors \emph{adjacent} if they differ by $1$ modulo $4$, and \emph{opposite} if they differ by $2$. If the point $v$ is colored $i$, then we center spheres of radius $\sqrt{2}/2$ at the points in $\{v\} \times (D_3 + t_i)$. In order for the spheres not to overlap, the four-colored point configuration must satisfy the following constraints:
\begin{enumerate}
\item Two distinct points with the same color must have distance at least $\sqrt{2}$ between them.

\item Two points with adjacent colors must have distance at least $\sqrt{5}/2$ between them.

\item Two points with opposite colors must have distance at least $1$ between them.
\end{enumerate}
A \emph{valid} coloring is one that satisfies these constraints. If a valid four-colored point configuration has $\delta$ points per unit area in $\R^2$, then the corresponding sphere packing in $\R^5$ has packing density given by 
\[
\frac{\pi^{5/2}}{(5/2)!} \left(\frac{\sqrt{2}}{2}\right)^5 \cdot \delta \cdot \frac{1}{2} = \frac{\pi^2 \sqrt{2}}{30} \, \delta.
\]
Here, we are multiplying the volume of a sphere of radius $\sqrt{2}/2$ in $\R^5$ with the point densities of the four-colored point configuration in $\R^2$ and the $D_3$ root lattice in $\R^3$. In particular, matching the packing density of the $D_5$ root lattice amounts to achieving $\delta=1$. Presumably it is impossible to achieve $\delta>1$:

\begin{conjecture}
For every valid four-colored point configuration $C$ in $\R^2$,
\[
\limsup_{r \to \infty} \frac{\#\{x \in C : |x| \le r\}}{\pi r^2} \le 1.
\]
\end{conjecture}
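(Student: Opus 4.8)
The plan is to prove the bound by a linear programming argument of Cohn--Elkies type, adapted to the four-colored setting. Because the constraints depend only on the difference of the two colors modulo~$4$, the natural tool is a \emph{four-type} Delsarte bound, in the style of the multiple-size packing bounds of de Laat, Oliveira, and Vallentin. One seeks radial Schwartz functions $g_0,g_1,g_2\colon\R^2\to\R$, where $g_k$ governs the interaction between colors differing by $k$ (and $g_3=g_1$ by symmetry), subject to the sign conditions $g_k(x)\le 0$ for $|x|\ge d_k$, with $d_0=\sqrt2$, $d_1=\sqrt5/2$, and $d_2=1$. The $\Z/4$-symmetry makes the relevant Gram matrix on the Fourier side circulant, so its positive semidefiniteness reduces to the three scalar inequalities
\[
\hat g_0(\xi)-\hat g_2(\xi)\ge0,\qquad \hat g_0(\xi)+2\hat g_1(\xi)+\hat g_2(\xi)\ge0,\qquad \hat g_0(\xi)-2\hat g_1(\xi)+\hat g_2(\xi)\ge0
\]
for every $\xi\in\R^2$. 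Summing $g_{c(y)-c(x)}(x-y)$ over $x,y\in C$ and evaluating on both the space and frequency sides by Poisson summation then bounds $\delta$ from above in terms of $g_0(0)$ and the frequency-zero values $\hat g_0(0),\hat g_1(0),\hat g_2(0)$; the goal is a certificate making this bound equal to $1$.

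Equivalently, this is the $\R^5$ Cohn--Elkies bound restricted to test functions that are $D_3$-periodic in the first three coordinates. Such functions depend separately on the $D_3$-coset and on the planar coordinates, and so form a strictly larger class than the radial functions on $\R^5$; the resulting bound can therefore be strictly stronger than the unrestricted five-dimensional linear programming bound, which is not even known to establish the optimality of $D_5$. This is the reason to hope that the bound can reach the $D_5$ value exactly. The first concrete step I would take is to set up the finite-dimensional semidefinite relaxation of the four-type program with a polynomial-times-Gaussian ansatz for the $g_k$ and solve it numerically, to see whether the optimum converges to $1$.

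The crux is to exhibit the optimal functions in closed form. Since the bound must be attained by every valid configuration achieving $\delta=1$---including the one that yields $D_5$ and the uniform packings of this section---the extremal $g_k$ are forced to vanish tangentially (to second order) at exactly the distances realized between colors differing by $k$ in those configurations, while their transforms must vanish at the dual frequencies. This is a sharp interpolation problem of the kind solved in dimensions $8$ and $24$ by Viazovska and by Cohn, Kumar, Miller, Radchenko, and Viazovska. I would look for the $g_k$ among (quasi)modular forms adapted to the two-dimensional, $D_3$-periodic Fourier analysis, engineered to force the prescribed zeros while keeping the three circulant eigenvalues nonnegative.

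The main obstacle is precisely this construction of a tight dual certificate: controlling the signs in space, the three positivity conditions in frequency, and tightness at $\delta=1$ all at once is exactly the kind of sharp linear programming problem that remains open in five dimensions, and there is no guarantee a priori that the requisite magic functions exist even in the enlarged $D_3$-periodic class. Should the numerical bound plateau strictly above $1$, the linear programming method cannot close the gap by itself, and a genuinely combinatorial ingredient would be needed---for example, a charging argument assigning to each planar point a region of area at least~$1$ and using the three color-dependent distances to prove these regions disjoint. The differing exclusion radii for the three types of color pair make such a tiling delicate, so I expect the certificate construction, rather than the setup, to be the true difficulty.
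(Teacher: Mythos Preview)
The statement you are attempting to prove is labeled as a \emph{Conjecture} in the paper, and the paper explicitly does not prove it: the authors write that they ``do not have a proof of this conjecture, but because it is fundamentally two-dimensional, we expect it to be more tractable than the full optimality of $D_5$ as a sphere packing.'' There is therefore no proof in the paper to compare your proposal against.

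Your proposal is not a proof either; it is a research outline. You correctly set up the four-type linear programming framework and identify the natural circulant structure, and your observation that restricting to $D_3$-periodic test functions enlarges the admissible class beyond radial functions on $\R^5$ is a genuine reason for optimism. But you yourself name the crux: producing closed-form magic functions $g_0,g_1,g_2$ that are simultaneously tight. This is exactly the obstruction that keeps the five-dimensional sphere packing problem open, and you offer no mechanism for overcoming it here---only the hope that numerics will converge to~$1$ and that a Viazovska-style interpolation construction will then be discoverable. Neither hope is substantiated. The fallback combinatorial charging argument is likewise only sketched, with the difficulty (three distinct exclusion radii depending on color differences) noted but not addressed. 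So the proposal identifies a reasonable line of attack and its principal obstacle, but it does not close the gap, and the conjecture remains open.
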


We do not have a proof of this conjecture, but because it is fundamentally two-dimensional, we expect it to be more tractable than the full optimality of $D_5$ as a sphere packing.

To achieve $\delta=1$, we can tile the plane with two types of triangles: isosceles triangles with side lengths $\sqrt{5}/2,\sqrt{5}/2,1$ or $1,1,\sqrt{2}$, which we call $\Delta_1$ and $\Delta_2$. Both of these triangles have area $1/2$, and it follows that in every edge-to-edge tiling, there is one vertex per unit area.\footnote{The angles in a triangle add up to $\pi$, which means they account for half of the total angle surrounding a single vertex. Therefore there must be half a vertex per triangle on average in any edge-to-edge tiling.} Our five-dimensional sphere packings are obtained from valid four-colorings of the vertices of these tilings. Specifically, vertices at distance~$\sqrt{2}$ have the same color, those at distance~$\sqrt{5}/2$ have adjacent colors, and those at distance~$1$ have opposite colors.

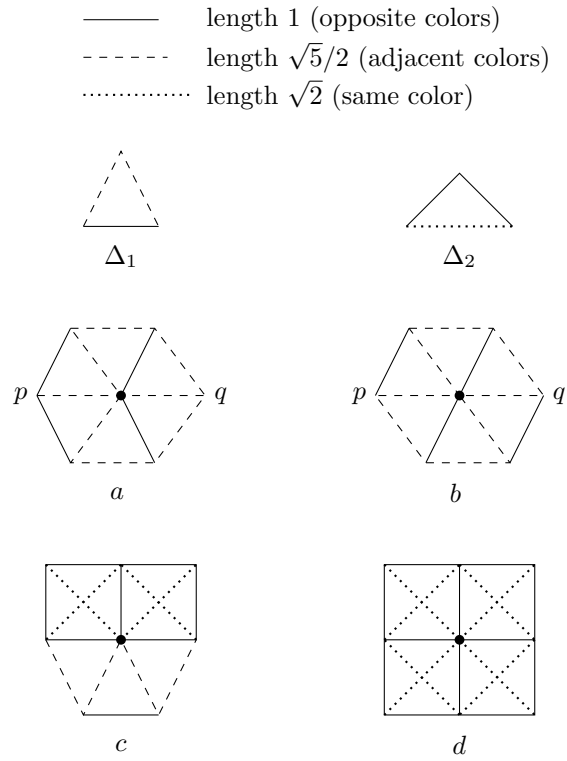
\begin{figure}
\centering
\begin{tikzpicture}
\begin{scope}[shift={(0,1.75)}]
\draw (0,1)--(1,1);
\draw ({sqrt(2)+0.1},1) node[right] {length $1$ (opposite colors)};
\draw[dashed] (0,0.5)--({sqrt(5)/2},0.5);
\draw ({sqrt(2)+0.1},0.5) node[right] {length $\sqrt{5}/2$ (adjacent colors)};
\draw[dotted,thick] (0,0)--({sqrt(2)},0);
\draw ({sqrt(2)+0.1},0) node[right] {length $\sqrt{2}$ (same color)};
\end{scope}
\draw (0,0)--(1,0);
\draw[dashed] (0,0)--(0.5,1)--(1,0);
\draw (0.5,-0.4) node {$\Delta_1$};
\begin{scope}[shift={(5,0)}]
\draw[dotted,thick] ({-sqrt(2)/2},0)--({sqrt(2)/2},0);
\draw ({-sqrt(2)/2},0)--(0,{sqrt(2)/2})--({sqrt(2)/2},0);
\draw (0,-0.4) node {$\Delta_2$};
\end{scope}
\begin{scope}[shift={(0.5,-2.25)}]
\fill (0,0) circle (0.065);
\draw (0,0)--({1/sqrt(5)},{2/sqrt(5)});
\draw (0,0)--({1/sqrt(5)},{-2/sqrt(5)});
\draw ({-sqrt(5)/2},0)--({-sqrt(5)/2+1/sqrt(5)},{2/sqrt(5)});
\draw ({-sqrt(5)/2},0)--({-sqrt(5)/2+1/sqrt(5)},{-2/sqrt(5)});
\draw[dashed] ({-sqrt(5)/2+1/sqrt(5)},{2/sqrt(5)})--({1/sqrt(5)},{2/sqrt(5)})--({sqrt(5)/2},0)--({1/sqrt(5)},{-2/sqrt(5)})--({-sqrt(5)/2+1/sqrt(5)},{-2/sqrt(5)})--(0,0)--cycle;
\draw[dashed] ({-sqrt(5)/2},0)--({sqrt(5)/2},0);
\draw ({sqrt(5)/2},0) node[right] {$q$};
\draw ({-sqrt(5)/2},0) node[left] {$p$};
\draw ({1/sqrt(5)-0.5},{-2/sqrt(5)-0.4}) node {$a$};
\end{scope}
\begin{scope}[shift={({5},-2.25)}]
\fill (0,0) circle (0.065);
\draw (0,0)--({1/sqrt(5)},{2/sqrt(5)});
\draw (0,0)--({-1/sqrt(5)},{-2/sqrt(5)});
\draw ({-sqrt(5)/2},0)--({-sqrt(5)/2+1/sqrt(5)},{2/sqrt(5)});
\draw ({sqrt(5)/2},0)--({sqrt(5)/2-1/sqrt(5)},{-2/sqrt(5)});
\draw[dashed] ({-sqrt(5)/2+1/sqrt(5)},{2/sqrt(5)})--({1/sqrt(5)},{2/sqrt(5)})--({sqrt(5)/2},0)--(0,0)--({sqrt(5)/2-1/sqrt(5)},{-2/sqrt(5)})--({-1/sqrt(5)},{-2/sqrt(5)})--({-sqrt(5)/2},0)--(0,0)--cycle;
\draw ({sqrt(5)/2},0) node[right] {$q$};
\draw ({-sqrt(5)/2},0) node[left] {$p$};
\draw ({1/sqrt(5)-0.5},{-2/sqrt(5)-0.4}) node {$b$};
\end{scope}
\begin{scope}[shift={(-0.5,-6.5)}]
\fill (1,1) circle (0.065);
\draw (0,1)--(0,2)--(2,2)--(2,1)--cycle;
\draw (1,1)--(1,2);
\draw[dotted,thick] (1,1)--(2,2);
\draw[dotted,thick] (0,2)--(1,1);
\draw[dotted,thick] (2,1)--(1,2);
\draw[dotted,thick] (1,2)--(0,1);
\draw (0.5,0)--(1.5,0);
\draw[dashed] (0,1)--(0.5,0)--(1,1)--(1.5,0)--(2,1);
\draw ({1},{-0.4}) node {$c$};
\end{scope}
\begin{scope}[shift={({4},-6.5)}]
\fill (1,1) circle (0.065);
\draw (0,0)--(2,0)--(2,2)--(0,2)--cycle;
\draw (1,0)--(1,2);
\draw (0,1)--(2,1);
\draw[dotted,thick] (0,0)--(1,1);
\draw[dotted,thick] (1,1)--(2,2);
\draw[dotted,thick] (0,2)--(1,1);
\draw[dotted,thick] (1,1)--(2,0);
\draw[dotted,thick] (0,1)--(1,2);
\draw[dotted,thick] (1,2)--(2,1);
\draw[dotted,thick] (2,1)--(1,0);
\draw[dotted,thick] (1,0)--(0,1);
\draw ({1},{-0.4}) node {$d$};
\end{scope}
\end{tikzpicture}
\caption{The four possible ways to arrange triangles $\Delta_1$ and $\Delta_2$ around a point, labeled $a$ through $d$. In arrangements $a$ and $b$, two of the neighboring points are labeled $p$ and $q$.}
\label{figurearrangements}
\end{figure}

\begin{table}
\caption{The kissing configurations at the points marked with dots in Figure~\ref{figurearrangements}.}
\label{tablekissing}
\centering
\begin{tabular}{ccc}
\toprule
Local arrangement & Condition & Kissing configuration\\
\midrule
$a$ & $p$ and $q$ same color & $R_5$\\
$a$ & $p$ and $q$ different colors & $Q_5$\\
\midrule
$b$ & $p$ and $q$ same color & $L_5$\\
$b$ & $p$ and $q$ different colors  & $D_5$\\
\midrule
$c$ & & $L_5$\\
\midrule
$d$ & & $D_5$\\
\bottomrule
\end{tabular}
\end{table}

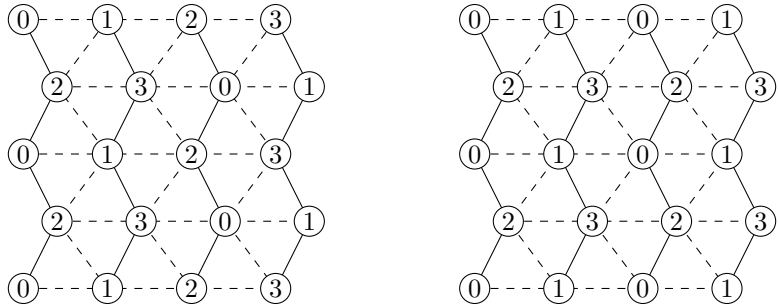
\begin{figure}
\centering
\begin{tikzpicture}
\draw[dashed] (0,0)--({3*sqrt(5)/2},0);
\draw[dashed] ({1/sqrt(5)},{2/sqrt(5)})--({1/sqrt(5)+3*sqrt(5)/2},{2/sqrt(5)});
\draw[dashed] (0,{4/sqrt(5)})--({3*sqrt(5)/2},{4/sqrt(5)});
\draw[dashed] ({1/sqrt(5)},{6/sqrt(5)})--({1/sqrt(5)+3*sqrt(5)/2},{6/sqrt(5)});
\draw[dashed] (0,{8/sqrt(5)})--({3*sqrt(5)/2},{8/sqrt(5)});
\draw[dashed] ({sqrt(5)/2},{8/sqrt(5)})--({1/sqrt(5)},{6/sqrt(5)})--({sqrt(5)/2},{4/sqrt(5)})--({1/sqrt(5)},{2/sqrt(5)})--({sqrt(5)/2},0);
\draw[dashed] ({2*sqrt(5)/2},{8/sqrt(5)})--({sqrt(5)/2+1/sqrt(5)},{6/sqrt(5)})--({2*sqrt(5)/2},{4/sqrt(5)})--({sqrt(5)/2+1/sqrt(5)},{2/sqrt(5)})--({2*sqrt(5)/2},0);
\draw[dashed] ({3*sqrt(5)/2},{8/sqrt(5)})--({2*sqrt(5)/2+1/sqrt(5)},{6/sqrt(5)})--({3*sqrt(5)/2},{4/sqrt(5)})--({2*sqrt(5)/2+1/sqrt(5)},{2/sqrt(5)})--({3*sqrt(5)/2},0);
\draw (0,0)--({1/sqrt(5)},{2/sqrt(5)})--(0,{4/sqrt(5)})--({1/sqrt(5)},{6/sqrt(5)})--(0,{8/sqrt(5)});
\draw ({sqrt(5)/2},0)--({sqrt(5)/2+1/sqrt(5)},{2/sqrt(5)})--({sqrt(5)/2},{4/sqrt(5)})--({sqrt(5)/2+1/sqrt(5)},{6/sqrt(5)})--({sqrt(5)/2},{8/sqrt(5)});
\draw ({sqrt(5)},0)--({sqrt(5)+1/sqrt(5)},{2/sqrt(5)})--({sqrt(5)},{4/sqrt(5)})--({sqrt(5)+1/sqrt(5)},{6/sqrt(5)})--({sqrt(5)},{8/sqrt(5)});
\draw ({3*sqrt(5)/2},0)--({3*sqrt(5)/2+1/sqrt(5)},{2/sqrt(5)})--({3*sqrt(5)/2},{4/sqrt(5)})--({3*sqrt(5)/2+1/sqrt(5)},{6/sqrt(5)})--({3*sqrt(5)/2},{8/sqrt(5)});
\fill[white] (0,0) circle (0.2); \draw (0,0) circle (0.2); \draw (0,0) node {$0$};
\fill[white] ({sqrt(5)/2},0) circle (0.2); \draw ({sqrt(5)/2},0) circle (0.2); \draw ({sqrt(5)/2},0) node {$1$};
\fill[white] ({2*sqrt(5)/2},0) circle (0.2); \draw ({2*sqrt(5)/2},0) circle (0.2); \draw ({2*sqrt(5)/2},0) node {$2$};
\fill[white] ({3*sqrt(5)/2},0) circle (0.2); \draw ({3*sqrt(5)/2},0) circle (0.2); \draw ({3*sqrt(5)/2},0) node {$3$};
\fill[white] ({1/sqrt(5)},{2/sqrt(5)}) circle (0.2); \draw ({1/sqrt(5)},{2/sqrt(5)}) circle (0.2); \draw ({1/sqrt(5)},{2/sqrt(5)}) node {$2$};
\fill[white] ({1/sqrt(5)+sqrt(5)/2},{2/sqrt(5)}) circle (0.2); \draw ({1/sqrt(5)+sqrt(5)/2},{2/sqrt(5)}) circle (0.2); \draw ({1/sqrt(5)+sqrt(5)/2},{2/sqrt(5)}) node {$3$};
\fill[white] ({1/sqrt(5)+2*sqrt(5)/2},{2/sqrt(5)}) circle (0.2); \draw ({1/sqrt(5)+2*sqrt(5)/2},{2/sqrt(5)}) circle (0.2); \draw ({1/sqrt(5)+2*sqrt(5)/2},{2/sqrt(5)}) node {$0$};
\fill[white] ({1/sqrt(5)+3*sqrt(5)/2},{2/sqrt(5)}) circle (0.2); \draw ({1/sqrt(5)+3*sqrt(5)/2},{2/sqrt(5)}) circle (0.2); \draw ({1/sqrt(5)+3*sqrt(5)/2},{2/sqrt(5)}) node {$1$};
\fill[white] (0,{4/sqrt(5)}) circle (0.2); \draw (0,{4/sqrt(5)}) circle (0.2); \draw (0,{4/sqrt(5)}) node {$0$};
\fill[white] ({sqrt(5)/2},{4/sqrt(5)}) circle (0.2); \draw ({sqrt(5)/2},{4/sqrt(5)}) circle (0.2); \draw ({sqrt(5)/2},{4/sqrt(5)}) node {$1$};
\fill[white] ({2*sqrt(5)/2},{4/sqrt(5)}) circle (0.2); \draw ({2*sqrt(5)/2},{4/sqrt(5)}) circle (0.2); \draw ({2*sqrt(5)/2},{4/sqrt(5)}) node {$2$};
\fill[white] ({3*sqrt(5)/2},{4/sqrt(5)}) circle (0.2); \draw ({3*sqrt(5)/2},{4/sqrt(5)}) circle (0.2); \draw ({3*sqrt(5)/2},{4/sqrt(5)}) node {$3$};
\fill[white] ({1/sqrt(5)},{6/sqrt(5)}) circle (0.2); \draw ({1/sqrt(5)},{6/sqrt(5)}) circle (0.2); \draw ({1/sqrt(5)},{6/sqrt(5)}) node {$2$};
\fill[white] ({1/sqrt(5)+sqrt(5)/2},{6/sqrt(5)}) circle (0.2); \draw ({1/sqrt(5)+sqrt(5)/2},{6/sqrt(5)}) circle (0.2); \draw ({1/sqrt(5)+sqrt(5)/2},{6/sqrt(5)}) node {$3$};
\fill[white] ({1/sqrt(5)+2*sqrt(5)/2},{6/sqrt(5)}) circle (0.2); \draw ({1/sqrt(5)+2*sqrt(5)/2},{6/sqrt(5)}) circle (0.2); \draw ({1/sqrt(5)+2*sqrt(5)/2},{6/sqrt(5)}) node {$0$};
\fill[white] ({1/sqrt(5)+3*sqrt(5)/2},{6/sqrt(5)}) circle (0.2); \draw ({1/sqrt(5)+3*sqrt(5)/2},{6/sqrt(5)}) circle (0.2); \draw ({1/sqrt(5)+3*sqrt(5)/2},{6/sqrt(5)}) node {$1$};
\fill[white] (0,{8/sqrt(5)}) circle (0.2); \draw (0,{8/sqrt(5)}) circle (0.2); \draw (0,{8/sqrt(5)}) node {$0$};
\fill[white] ({sqrt(5)/2},{8/sqrt(5)}) circle (0.2); \draw ({sqrt(5)/2},{8/sqrt(5)}) circle (0.2); \draw ({sqrt(5)/2},{8/sqrt(5)}) node {$1$};
\fill[white] ({2*sqrt(5)/2},{8/sqrt(5)}) circle (0.2); \draw ({2*sqrt(5)/2},{8/sqrt(5)}) circle (0.2); \draw ({2*sqrt(5)/2},{8/sqrt(5)}) node {$2$};
\fill[white] ({3*sqrt(5)/2},{8/sqrt(5)}) circle (0.2); \draw ({3*sqrt(5)/2},{8/sqrt(5)}) circle (0.2); \draw ({3*sqrt(5)/2},{8/sqrt(5)}) node {$3$};
\begin{scope}[shift={(6,0)}]
\draw[dashed] (0,0)--({3*sqrt(5)/2},0);
\draw[dashed] ({1/sqrt(5)},{2/sqrt(5)})--({1/sqrt(5)+3*sqrt(5)/2},{2/sqrt(5)});
\draw[dashed] (0,{4/sqrt(5)})--({3*sqrt(5)/2},{4/sqrt(5)});
\draw[dashed] ({1/sqrt(5)},{6/sqrt(5)})--({1/sqrt(5)+3*sqrt(5)/2},{6/sqrt(5)});
\draw[dashed] (0,{8/sqrt(5)})--({3*sqrt(5)/2},{8/sqrt(5)});
\draw[dashed] ({sqrt(5)/2},{8/sqrt(5)})--({1/sqrt(5)},{6/sqrt(5)})--({sqrt(5)/2},{4/sqrt(5)})--({1/sqrt(5)},{2/sqrt(5)})--({sqrt(5)/2},0);
\draw[dashed] ({2*sqrt(5)/2},{8/sqrt(5)})--({sqrt(5)/2+1/sqrt(5)},{6/sqrt(5)})--({2*sqrt(5)/2},{4/sqrt(5)})--({sqrt(5)/2+1/sqrt(5)},{2/sqrt(5)})--({2*sqrt(5)/2},0);
\draw[dashed] ({3*sqrt(5)/2},{8/sqrt(5)})--({2*sqrt(5)/2+1/sqrt(5)},{6/sqrt(5)})--({3*sqrt(5)/2},{4/sqrt(5)})--({2*sqrt(5)/2+1/sqrt(5)},{2/sqrt(5)})--({3*sqrt(5)/2},0);
\draw (0,0)--({1/sqrt(5)},{2/sqrt(5)})--(0,{4/sqrt(5)})--({1/sqrt(5)},{6/sqrt(5)})--(0,{8/sqrt(5)});
\draw ({sqrt(5)/2},0)--({sqrt(5)/2+1/sqrt(5)},{2/sqrt(5)})--({sqrt(5)/2},{4/sqrt(5)})--({sqrt(5)/2+1/sqrt(5)},{6/sqrt(5)})--({sqrt(5)/2},{8/sqrt(5)});
\draw ({sqrt(5)},0)--({sqrt(5)+1/sqrt(5)},{2/sqrt(5)})--({sqrt(5)},{4/sqrt(5)})--({sqrt(5)+1/sqrt(5)},{6/sqrt(5)})--({sqrt(5)},{8/sqrt(5)});
\draw ({3*sqrt(5)/2},0)--({3*sqrt(5)/2+1/sqrt(5)},{2/sqrt(5)})--({3*sqrt(5)/2},{4/sqrt(5)})--({3*sqrt(5)/2+1/sqrt(5)},{6/sqrt(5)})--({3*sqrt(5)/2},{8/sqrt(5)});
\fill[white] (0,0) circle (0.2); \draw (0,0) circle (0.2); \draw (0,0) node {$0$};
\fill[white] ({sqrt(5)/2},0) circle (0.2); \draw ({sqrt(5)/2},0) circle (0.2); \draw ({sqrt(5)/2},0) node {$1$};
\fill[white] ({2*sqrt(5)/2},0) circle (0.2); \draw ({2*sqrt(5)/2},0) circle (0.2); \draw ({2*sqrt(5)/2},0) node {$0$};
\fill[white] ({3*sqrt(5)/2},0) circle (0.2); \draw ({3*sqrt(5)/2},0) circle (0.2); \draw ({3*sqrt(5)/2},0) node {$1$};
\fill[white] ({1/sqrt(5)},{2/sqrt(5)}) circle (0.2); \draw ({1/sqrt(5)},{2/sqrt(5)}) circle (0.2); \draw ({1/sqrt(5)},{2/sqrt(5)}) node {$2$};
\fill[white] ({1/sqrt(5)+sqrt(5)/2},{2/sqrt(5)}) circle (0.2); \draw ({1/sqrt(5)+sqrt(5)/2},{2/sqrt(5)}) circle (0.2); \draw ({1/sqrt(5)+sqrt(5)/2},{2/sqrt(5)}) node {$3$};
\fill[white] ({1/sqrt(5)+2*sqrt(5)/2},{2/sqrt(5)}) circle (0.2); \draw ({1/sqrt(5)+2*sqrt(5)/2},{2/sqrt(5)}) circle (0.2); \draw ({1/sqrt(5)+2*sqrt(5)/2},{2/sqrt(5)}) node {$2$};
\fill[white] ({1/sqrt(5)+3*sqrt(5)/2},{2/sqrt(5)}) circle (0.2); \draw ({1/sqrt(5)+3*sqrt(5)/2},{2/sqrt(5)}) circle (0.2); \draw ({1/sqrt(5)+3*sqrt(5)/2},{2/sqrt(5)}) node {$3$};
\fill[white] (0,{4/sqrt(5)}) circle (0.2); \draw (0,{4/sqrt(5)}) circle (0.2); \draw (0,{4/sqrt(5)}) node {$0$};
\fill[white] ({sqrt(5)/2},{4/sqrt(5)}) circle (0.2); \draw ({sqrt(5)/2},{4/sqrt(5)}) circle (0.2); \draw ({sqrt(5)/2},{4/sqrt(5)}) node {$1$};
\fill[white] ({2*sqrt(5)/2},{4/sqrt(5)}) circle (0.2); \draw ({2*sqrt(5)/2},{4/sqrt(5)}) circle (0.2); \draw ({2*sqrt(5)/2},{4/sqrt(5)}) node {$0$};
\fill[white] ({3*sqrt(5)/2},{4/sqrt(5)}) circle (0.2); \draw ({3*sqrt(5)/2},{4/sqrt(5)}) circle (0.2); \draw ({3*sqrt(5)/2},{4/sqrt(5)}) node {$1$};
\fill[white] ({1/sqrt(5)},{6/sqrt(5)}) circle (0.2); \draw ({1/sqrt(5)},{6/sqrt(5)}) circle (0.2); \draw ({1/sqrt(5)},{6/sqrt(5)}) node {$2$};
\fill[white] ({1/sqrt(5)+sqrt(5)/2},{6/sqrt(5)}) circle (0.2); \draw ({1/sqrt(5)+sqrt(5)/2},{6/sqrt(5)}) circle (0.2); \draw ({1/sqrt(5)+sqrt(5)/2},{6/sqrt(5)}) node {$3$};
\fill[white] ({1/sqrt(5)+2*sqrt(5)/2},{6/sqrt(5)}) circle (0.2); \draw ({1/sqrt(5)+2*sqrt(5)/2},{6/sqrt(5)}) circle (0.2); \draw ({1/sqrt(5)+2*sqrt(5)/2},{6/sqrt(5)}) node {$2$};
\fill[white] ({1/sqrt(5)+3*sqrt(5)/2},{6/sqrt(5)}) circle (0.2); \draw ({1/sqrt(5)+3*sqrt(5)/2},{6/sqrt(5)}) circle (0.2); \draw ({1/sqrt(5)+3*sqrt(5)/2},{6/sqrt(5)}) node {$3$};
\fill[white] (0,{8/sqrt(5)}) circle (0.2); \draw (0,{8/sqrt(5)}) circle (0.2); \draw (0,{8/sqrt(5)}) node {$0$};
\fill[white] ({sqrt(5)/2},{8/sqrt(5)}) circle (0.2); \draw ({sqrt(5)/2},{8/sqrt(5)}) circle (0.2); \draw ({sqrt(5)/2},{8/sqrt(5)}) node {$1$};
\fill[white] ({2*sqrt(5)/2},{8/sqrt(5)}) circle (0.2); \draw ({2*sqrt(5)/2},{8/sqrt(5)}) circle (0.2); \draw ({2*sqrt(5)/2},{8/sqrt(5)}) node {$0$};
\fill[white] ({3*sqrt(5)/2},{8/sqrt(5)}) circle (0.2); \draw ({3*sqrt(5)/2},{8/sqrt(5)}) circle (0.2); \draw ({3*sqrt(5)/2},{8/sqrt(5)}) node {$1$};
\end{scope}
\end{tikzpicture}
\caption{The uniform packings with kissing configuration $Q_5$ (on the left) and $R_5$ (on the right).}
\label{figureuniform}
\end{figure}

Figure~\ref{figurearrangements} shows the triangles $\Delta_1$ and $\Delta_2$, together with the four ways they can be arranged around a vertex, and Table~\ref{tablekissing} lists the resulting kissing configurations. Figure~\ref{figureuniform} shows two examples of valid four-colorings. In these packings, every local arrangement is of type $a$ from Figure~\ref{figurearrangements}, the colorings are constant within each vertical column, and the colorings repeat horizontally with period four for the packing on the left and two for the packing on the right.

The local analysis from Figure~\ref{figurearrangements} leads to a complete classification of edge-to-edge tilings with the tiles $\Delta_1$ and $\Delta_2$. Among these possibilities, those shown in Figure~\ref{figureuniform} are the only ones that have kissing configuration $Q_5$ everywhere or $R_5$ everywhere. However, other combinations of kissing configurations are possible, such as tilings that have layers of types $a$ and $b$, in which case some spheres have kissing configuration $Q_5$ or $R_5$ and some have kissing configuration $D_5$ or $L_5$. One can combine all four kissing configurations in a single packing by varying the coloring condition in Table~\ref{tablekissing} among the different spheres in a packing.

When a solid line segment from Figure~\ref{figurearrangements} extends to a solid straight line of infinite length in both directions, that corresponds to a $D_4$ cross section of the packing. In particular, tilings with layers of types $b$, $c$, and $d$ (with the $b$ layers rotated so all solid lines are parallel) are equivalent to stacking $D_4$ layers as described by Conway and Sloane in Section~5 of \cite{cs1995}. Thus, these tilings can produce packings with kissing configurations of $D_5$ or $L_5$ or some mix of both, including the $D_5$ root lattice and the three uniform packings with $L_5$ kissing configuration. 

As mentioned in the introduction, the existence of more general packings such as those in Figure~\ref{figureuniform} disproves conjectures of Conway and Sloane \cite{cs1995} and Kuperberg \cite{kuperberg2000}. We propose the following conjecture as a salvage for Conjecture~15 in \cite{kuperberg2000}.

\begin{conjecture} \label{conjecture:5d}
Every weakly recurrent, optimally dense sphere packing in $\R^5$ can be obtained from a valid four-coloring of the vertices of an edge-to-edge tiling of $\R^2$ with triangles congruent to $\Delta_1$ and $\Delta_2$.
\end{conjecture}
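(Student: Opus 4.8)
The plan is to reduce this five-dimensional statement to a two-dimensional rigidity problem, following the fibered structure of Section~\ref{section4.1}. Throughout I would interpret ``optimally dense'' as achieving the density $\pi^2\sqrt{2}/30$ attained by $D_5$. The first phase is to show that weak recurrence together with optimal density forces the packing to be locally optimal at every sphere: each sphere touches exactly $40$ others, and its contact points form an optimal $40$-point spherical code. The mechanism is the one underlying Kuperberg's framework---an averaging argument shows that the mean local density equals the global optimum, and weak recurrence upgrades this average statement to a pointwise one, so that no sphere can have a suboptimal neighborhood that is silently compensated elsewhere. This phase presupposes that $\pi^2\sqrt{2}/30$ is genuinely the optimal density in $\R^5$, which is itself open; absent a proof of the optimality of $D_5$, it must be taken as a hypothesis.

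The second phase is to pass from local to global structure. Granting, conjecturally, that $D_5$, $L_5$, $Q_5$, and $R_5$ exhaust the optimal $40$-point kissing configurations---an upgrade of the ``at least four'' theorem to ``exactly four,'' also open---I would observe that each of the four configurations contains a distinguished family of parallel copies of the three-dimensional lattice $D_3$, namely the layers used in its construction. The goal is to show that these families align coherently across the whole packing, so that a single layering direction is forced and the packing takes the form $\bigcup_{v \in C} \{v\} \times (D_3 + t_{c(v)})$ for a four-colored point set $C \subset \R^2$ with color function $c$, the four colors indexing the cosets $t_0,\dots,t_3$ of $D_3$ in $D_3^*$. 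Non-overlap of the spheres is then precisely the validity of the coloring (constraints (1)--(3)), and the density formula $\tfrac{\pi^2\sqrt{2}}{30}\,\delta$ forces $\delta = 1$.

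The third phase is purely two-dimensional: a weakly recurrent valid four-coloring with $\delta = 1$ must be supported on the vertices of an edge-to-edge tiling by triangles congruent to $\Delta_1$ and $\Delta_2$. Here the distance constraints are saturated---equal colors sit at distance exactly $\sqrt{2}$, adjacent colors at distance exactly $\sqrt{5}/2$, opposite colors at distance exactly $1$---so a Delaunay/saturation argument should force the points to triangulate the plane into triangles of area $1/2$ whose edges have only these three lengths. A short local case analysis, essentially that of Figure~\ref{figurearrangements} and Table~\ref{tablekissing}, then identifies the only such triangles as $\Delta_1$ and $\Delta_2$ and shows that they meet edge-to-edge.

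I expect the main obstacle to be the second phase, and specifically its two unproven inputs: the optimality of $D_5$ as a packing and the completeness of the list $D_5$, $L_5$, $Q_5$, $R_5$ as kissing configurations. Even granting both, the genuinely new difficulty is the global rigidity---showing that the per-sphere $D_3$-layerings cannot twist or interleave into an exotic weakly recurrent packing lying outside the fibered family. This is exactly the point at which weak recurrence must be used in an essential, non-local way, and it is the reason the statement is offered as a conjecture rather than a theorem.
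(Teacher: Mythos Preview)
The paper does not prove this statement: it is explicitly labeled a conjecture, and the surrounding text says the authors ``have no great confidence'' in it and that it merely ``describes the current state of our knowledge.'' There is therefore no proof in the paper to compare your proposal against. Your write-up is not a proof either, and to your credit you say so---you correctly flag that the optimality of $D_5$ as a five-dimensional packing and the completeness of the list $\{D_5, L_5, Q_5, R_5\}$ of $40$-point kissing configurations are both open, and that the global alignment of the $D_3$-fibrations is the genuinely hard step even granting those inputs.

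A few additional gaps you may be underestimating. First, your Phase~1 needs not only the optimality of $D_5$ but also that the kissing number in $\R^5$ is exactly $40$; the paper notes that the best proved upper bound is $44$, so even the statement ``every sphere in an optimal weakly recurrent packing has exactly $40$ contacts'' rests on two separate conjectures, not one. Second, the ``averaging plus weak recurrence forces pointwise optimality'' step is more delicate than you suggest: weak recurrence does not by itself turn an average into a pointwise equality without some uniformity or compactness input, and you would need to articulate precisely which local density functional you are averaging and why its supremum coincides with the $D_5$ value. Third, in Phase~2 each of the four kissing configurations admits more than one $D_3$ cross section (for $D_5$ there are many, by symmetry), so the claim that neighboring spheres' layerings must agree is not automatic even once the configurations are classified---this is where the real content of the conjecture lives, and your outline does not yet contain a mechanism for it.
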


Of course Sz\"oll\H{o}si's discovery is cause for skepticism about any such classification. We have no great confidence in Conjecture~\ref{conjecture:5d}, but it describes the current state of our knowledge.

\subsection{Uniform packings}

The packings from Figure~\ref{figureuniform} are noteworthy because they are uniform. In other words, their automorphism groups act transitively on the spheres. In his 1967 paper \cite{leech1967}, Leech found three uniform non-lattice packings with the same density as the $D_5$ root lattice, and our construction adds two more. This list is in fact complete:

\begin{theorem}
Up to isometry, there are only six uniform sphere packings in $\R^5$ that have kissing configuration $D_5$, $L_5$, $Q_5$, or $R_5$, namely one with $D_5$, three with $L_5$, and one each with $Q_5$ or $R_5$.
\end{theorem}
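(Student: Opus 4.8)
The plan is to reduce the classification to the two-dimensional setting of Section~\ref{section4.1} and then carry out a finite enumeration. Each of the six packings in the statement has already been exhibited --- the $D_5$ root lattice, Leech's three $L_5$ packings \cite{leech1967}, and the two colorings in Figure~\ref{figureuniform} --- and each is uniform, since the fiber translations by $D_3$ together with the visible symmetries of its base coloring act transitively on the spheres. So the real content is completeness, and throughout I would use only the hypothesis that \emph{every} sphere has one of the four kissing configurations (which holds automatically once the packing is uniform), not the optimality assumed in Conjecture~\ref{conjecture:5d}.

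The first and main step is a reduction: every uniform packing $P$ whose spheres all have kissing configuration $D_5$, $L_5$, $Q_5$, or $R_5$ fibers over $D_3$, and hence arises from a valid four-coloring of an edge-to-edge tiling of $\R^2$ by $\Delta_1$ and $\Delta_2$. The geometric input is that in each of the four kissing configurations the $40$ contact vectors contain a cuboctahedron, namely the $12$ minimal vectors of a copy of $D_3$ spanning a three-dimensional subspace $V$; these are exactly the contacts lying in the $D_3$-sublattice through the central sphere. I would argue that this $V$ is respected by the automorphism group, so that transitivity forces a single fiber direction and produces a splitting $\R^5 = V \oplus V^\perp$. Projecting to $V^\perp \cong \R^2$ yields a four-colored point configuration, valid because the spheres are disjoint. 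The remaining $28$ contacts then distribute over the same-, adjacent-, and opposite-colored base neighbors, contributing $1$, $4$, and $6$ contacts respectively, and this count together with the validity constraints forces each base point to be surrounded exactly as in one of the arrangements $a$--$d$ of Figure~\ref{figurearrangements}; in particular the base configuration is the vertex set of a $\Delta_1\Delta_2$ tiling with one point per unit area.

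With the reduction in hand, uniformity says precisely that the colored tiling is vertex-transitive, so every vertex has the same local arrangement, and Table~\ref{tablekissing} records which arrangement and coloring condition corresponds to each kissing configuration. It then remains to enumerate the vertex-transitive valid four-colorings in each case. For $Q_5$ every vertex must be of type $a$ with its marked neighbors $p,q$ differently colored, and for $R_5$ of type $a$ with $p,q$ identically colored; the all-$a$ tiling is rigid, and vertex-transitivity plus validity then leave a unique coloring up to isometry in each case, namely the period-four and period-two colorings of Figure~\ref{figureuniform}. For $L_5$ and $D_5$ the admissible arrangements are exactly those whose solid edges extend to full lines, so $P$ is a stacking of $D_4$ layers in the sense of Conway and Sloane \cite{cs1995}; the uniform such stackings were classified by Leech \cite{leech1967}, giving the $D_5$ root lattice together with three non-lattice $L_5$ packings. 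Assembling the cases yields one packing each for $D_5$, $Q_5$, and $R_5$ and three for $L_5$.

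I expect the reduction to be the main obstacle, and within it the assertion that the fiber subspace $V$ is canonical. For $Q_5$ and $R_5$ the small symmetry group makes the cuboctahedron essentially unique, but the $D_5$ and $L_5$ kissing configurations contain many copies of $D_3$, so I would need to show either that all choices of fiber lead to isometric packings or that the finitely many inequivalent fibrations can be tracked in parallel. The most delicate point is ruling out uniform packings that do not fiber over $D_3$ at all; here I would first note that a transitive symmetry group acting on the Delone set of sphere centers has finite point stabilizers and is therefore crystallographic, so that $P$ is periodic, and then show that the $40$-contact hypothesis already rigidifies the second shell enough that the $D_3$ layer structure must propagate through the whole packing.
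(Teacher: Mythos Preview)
Your approach is genuinely different from the paper's. The paper does \emph{not} first reduce to the two-dimensional coloring picture of Section~\ref{section4.1}. Instead it argues by direct propagation: given the kissing configuration $K_0$ at one sphere, each neighbor $v \in K_0$ has kissing configuration $K_v$ isometric to $K_0$ and overlapping $K_0$ in a known set of points, and a computer search checks whether that overlap embeds in $K_0$ in an essentially unique way. For $D_5$ the extension is always unique; for $Q_5$ and $R_5$ uniqueness holds after one iterates (already-determined $K_v$'s further constrain the remaining ones); for $L_5$ the method shows only that the $D_4$ cross section is forced, and Conway--Sloane's analysis of $D_4$-stackings finishes. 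The $D_3$ fibration is never invoked as a hypothesis.

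The reduction you propose is a real gap, and you rightly flag it as the obstacle. The concrete problem is that the cuboctahedron is not canonical: the hyperoctahedral symmetry group of the $D_5$ root system acts transitively on its ten $D_3$ sub-root-systems, so the claim that $V$ is respected by the automorphism group fails there, and $L_5$ is similar (even for $Q_5$ the $S_5$ acting on the coordinates permutes the five $A_3$'s inside the $A_4$ cross section). Your fallback---track the finitely many local choices of $V$ and show some choice propagates to a global fibration---is essentially the same problem the paper solves by propagating the full forty-point configuration rather than a twelve-point piece of it, and even that richer propagation needs computer assistance for $Q_5$ and $R_5$. There is a second gap as well: even granting a $D_3$ fibration, the five-dimensional symmetry group need not preserve it (again the $D_5$ lattice is the obvious counterexample), so transitivity upstairs does not automatically yield vertex-transitivity of the base coloring, on which your two-dimensional enumeration rests. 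Both gaps require work of roughly the same strength as the paper's propagation step, and I do not see a shortcut that bypasses it.
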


\begin{proof}
Suppose $K_0$ is one of these kissing configurations in a uniform packing, and $v$ is a point in $K_0$. By uniformity, the kissing configuration $K_v$ at $v$ must be isometric to $K_0$, and it has some overlap with $K_0$, namely the points in $K_0$ at squared distance~$2$ from $v$. If this overlap has a unique extension to a kissing configuration around $v$ that is isometric to $K_0$, then $K_v$ is uniquely determined by $K_0$. If the extension is unique for all $v \in K_0$, then that is enough to determine the entire uniform packing by induction. For example, one can check that this is the case for $D_5$, based on the observation that the $D_5$ kissing configuration is closed under taking integer linear combinations that lie on the sphere. However, the other three kissing configurations are more subtle. To analyze them, we rely on a computer search.\footnote{Our implementation of this search is available through DSpace@MIT at \url{https://hdl.handle.net/1721.1/157699}.}

For each point $v \in K_0$, we can determine by depth-first search whether the overlap has a unique extension: we enumerate all the isometric embeddings of the overlap into $K_0$ and check whether they are all equivalent modulo symmetries of $K_0$. Each point $v$ that has a uniquely determined kissing configuration provides additional points of overlap with the kissing configurations that are yet to be determined, which increases our ability to determine them. For $Q_5$ and $R_5$, iterating this procedure determines $K_v$ for every $v \in K_0$, and there is therefore at most one uniform packing. We have constructed one in both cases, as shown in Figure~\ref{figureuniform}.

For $L_5$, this procedure cannot prove uniqueness, because it is not true. However, it shows that $K_v$ is uniquely determined whenever $v$ lies in the $D_4$ cross section specified by $v_5=0$. In other words, the uniform packing must be obtained from $D_4$ cross sections. This problem was analyzed by Conway and Sloane in Section~5 of \cite{cs1995}, where they show that Leech's uniform packings are the only ones that satisfy this condition.
\end{proof}

\begin{table}
\caption{The two uniform packings from Figure~\ref{figureuniform}, with the left one expressed as a 2-periodic packing above and the right one as a 4-periodic packing below.}
\label{tableperiodic}
\centering
\begin{tabular}{AA}
\toprule
\multicolumn{2}{c}{Lattice basis} & \multicolumn{2}{c}{Translation vectors}\\
\midrule
\multicolumn{4}{c}{$Q_5$ uniform packing}\\
v_0 &= (1,-1,0,0,0) & x_0 &=(0,0,0,0,0)\\
v_1 &= (0,1,-1,0,0) & x_1 &= (0,0,0,-1,-1)\\
v_2 &= (0,0,1,-1,0)\\
v_3 &= (0,0,0,1,-1)\\
v_4 &= (0.8,0.8,0.8,0.8,0.8)\\
\midrule
\multicolumn{4}{c}{$R_5$ uniform packing}\\
v_0 &= (1,-1,0,0,0) & x_0 &= (0,0,0,0,0)\\
v_1 &= (0,1,-1,0,0) & x_1 &= (0,0,0,-1,-1)\\
v_2 &= (0,0,1,-1,0) & x_2 &= (0,0,-1,-1,0)\\
v_3 &= (-0.5,-0.5,-0.5,-0.5,2) \quad\ 
& x_3 &= (0,0,0,1,-1) \\
v_4 &= (0.8,0.8,0.8,0.8,0.8)\\
\bottomrule
\end{tabular}
\end{table}

Given the set $\mathcal{P}$ of sphere centers in a periodic sphere packing in $\R^n$, let
\[\Lambda = \{y \in \R^n : \mathcal{P} = \mathcal{P} + y\}\]
be its translational symmetry lattice. Then $\mathcal{P}$ is the union of finitely many translates of $\Lambda$. It is called $m$-periodic if it consists of $m$ translates of $\Lambda$, and our choice of $\Lambda$ ensures that it cannot be written as the union of fewer translates of any other lattice (i.e., $m$ is minimal).

The uniform packing from Figure~\ref{figureuniform} with kissing configuration $Q_5$ is $2$-periodic, and the one with kissing configuration $R_5$ is $4$-periodic, with lattice bases and translation vectors shown in Table~\ref{tableperiodic}. Andreanov and Kallus \cite{ak2020} showed that no $2$-periodic sphere packing in $\R^5$ can be denser than the $D_5$ root lattice, and they obtained a complete list of the densest such packings. In the process they discovered the $Q_5$ uniform sphere packing, which also occurs in the acknowledgements of \cite{szollosi2023}. No such optimality theorem or classification is known beyond $2$-periodic packings. As for the remaining uniform packings, the $D_5$ root lattice is of course $1$-periodic, and the three $L_5$ uniform packings are $2$-periodic, $3$-periodic, and $4$-periodic (see~\cite{cs1995}).

\subsection{Symmetries of uniform packings}

We will show in this subsection that each symmetry of the $Q_5$ or $R_5$ kissing configuration extends to a symmetry of the corresponding uniform packing that fixes a point. The symmetry group of the kissing configuration is the stabilizer of a point, and the semidirect product of this group and the translational symmetry lattice has index $m$ in the symmetry group of the uniform packing when the packing is $m$-periodic (i.e., $m=2$ for $Q_5$ and $m=4$ for $R_5$).

We enumerate the symmetries as follows. Each symmetry of a subset $\mathcal{P}$ of $\R^n$ is an affine linear map $x \mapsto Ax+b$ with $A$ orthogonal. If $\Lambda = \{y \in \R^n : \mathcal{P} = \mathcal{P} + y\}$ as in the previous subsection, then $A$ must preserve $\Lambda$. To see why, note that if $x \mapsto Ax+b$ is a symmetry of $\mathcal{P}$ and $y \in \Lambda$, then
\[
\{Ax+b : x \in \mathcal{P}\} = \mathcal{P} = \mathcal{P}+y = \{A(x+y)+b :x \in \mathcal{P}\}= \{Ax+b+Ay :x \in \mathcal{P}\},
\]
from which it follows that $Ay \in \Lambda$, as desired. Thus, determining the symmetries of $\mathcal{P}$ amounts to determining the symmetries of $\Lambda$ and which translation vectors $b$ are compatible with them.

For the $Q_5$ uniform packing, the underlying lattice $\Lambda$ is the orthogonal direct sum of the $A_4$ root lattice and a one-dimensional lattice (see Table~\ref{tableperiodic} for a lattice basis). The symmetry group $G$ of $\Lambda$ is isomorphic to $S_5 \times C_2^2$, where the symmetric group $S_5$ permutes the five coordinates and the cyclic factors $C_2$ are generated by scalar multiplication by $-1$ and the reflection $s$ across the hyperplane perpendicular to the fifth lattice basis vector $v_4$ shown in Table~\ref{tableperiodic}. The reflection $s$ is the same reflection as in equation~\eqref{eqreflect}, and it corresponds to a reflection across a horizontal line in Figure~\ref{figureuniform}. Let $H$ be the subgroup of $G$ generated by $S_5$ and $s$ (the symmetry group of the kissing configuration $Q_5)$, let $H_0=H$, and let $H_1 = (-1)H$ be the coset obtained through scalar multiplication by $-1$. Then a case analysis shows that the symmetry group of the $Q_5$ uniform packing is
\[
\{x \mapsto Ax+x_i+y: A \in H_i, \ i \in \{0,1\}, \ y \in \Lambda\},
\]
where the translation vectors $x_0$ and $x_1$ are specified in Table~\ref{tableperiodic}. Each symmetry of $\Lambda$ corresponds to a unique translation vector, modulo $\Lambda$.

For the $R_5$ uniform packing, the underlying lattice $\Lambda$ is the orthogonal direct sum of the $A_3$ root lattice and two differently scaled one-dimensional lattices (again see Table~\ref{tableperiodic} for a lattice basis). The symmetry group $G$ of $\Lambda$ is isomorphic to $S_4 \times C_2^3$, where the symmetric group $S_4$ permutes the first four coordinates and the cyclic factors $C_2$ are generated by scalar multiplication by $-1$ and the reflections $r$ and $s$ across the hyperplanes perpendicular to the fourth and fifth lattice basis vectors $v_3$ and $v_4$ shown in Table~\ref{tableperiodic}. The reflections $r$ and $s$ correspond to reflections across vertical and horizontal lines in Figure~\ref{figureuniform}, respectively. Let $H$ be the subgroup of $G$ generated by $S_4$ and $s$ (the symmetry group of the kissing configuration $R_5)$, and let $H_0=H$, $H_1 = (-1)H$, $H_2 = rH$, and $H_3 = (-r) H$ be its cosets, where as above $(-1)$ denotes scalar multiplication by $-1$ and $(-r)$ denotes the composition of $(-1)$ and $r$. Then a case analysis shows that the symmetry group of the $R_5$ uniform packing is
\[
\{x \mapsto Ax+x_i+y: A \in H_i, \ i \in \{0,1,2,3\}, \ y \in \Lambda\},
\]
where again the translation vectors $x_i$ are specified in Table~\ref{tableperiodic}. As in the case of $Q_5$, each symmetry of $\Lambda$ corresponds to a unique translation vector, modulo $\Lambda$.
  
\subsection{Higher dimensions}
The four-coloring construction of five-dimensional packings from Section~\ref{section4.1} works equally well to produce six-dimensional sphere packings from valid four-colored point configurations in $\R^3$. One can construct such configurations from face-to-face tilings of $\R^3$ with irregular tetrahedra and octahedra that have $\Delta_1$ faces, as shown in Figure~\ref{figure6d}. These polyhedra have many lovely properties. For example, one octahedron and four tetrahedra can be assembled to form a larger tetrahedron (with doubled edge lengths, and with the tetrahedra nestled in the corners of the larger tetrahedron), while six octahedra and eight tetrahedra can form a larger octahedron (again with doubled edge lengths and the octahedra nestled in the corners). Iterating this procedure yields a tiling of all of space, which can be colored to produce the $E_6$ packing. Furthermore, other tilings yield all the six-dimensional sphere packings from Section~6 of \cite{cs1995}. However, we do not obtain any new six-dimensional sphere packings. Somehow the five-dimensional case has additional flexibility, which we have been unable to replicate in six or seven dimensions.

\begin{figure}
\centering
\begin{tikzpicture}
\draw (0,0)--(1,0);
\draw (-0.5,1)--(1.5,1);
\draw[dashed] (0,0)--(0.5,1)--(1,0);
\draw[dashed] (-0.5,1)--(0.5,-1)--(1.5,1);
\begin{scope}[shift={(6,{-17/100*sqrt(5)})}]
\draw ({-12/25*sqrt(5)},{16/25*sqrt(5)})--({-2/sqrt(5)},{1/sqrt(5)})--(0,0)--({2/sqrt(5)},{1/sqrt(5)})--({12/25*sqrt(5)},{16/25*sqrt(5)});
\draw[dashed] ({-12/25*sqrt(5)},{16/25*sqrt(5)})--(0,{sqrt(5)/2});
\draw[dashed] ({-2/sqrt(5)},{1/sqrt(5)})--(0,{sqrt(5)/2});
\draw[dashed] (0,0)--(0,{sqrt(5)/2});
\draw[dashed] ({2/sqrt(5)},{1/sqrt(5)})--(0,{sqrt(5)/2});
\draw[dashed] ({12/25*sqrt(5)},{16/25*sqrt(5)})--(0,{sqrt(5)/2});
\draw[dashed] ({-12/25*sqrt(5)},{16/25*sqrt(5)})--({-9/10*sqrt(5)}, {17/50*sqrt(5)})--({-2/sqrt(5)},{1/sqrt(5)})--({-2/5*sqrt(5)}, {-3/10*sqrt(5)})--(0,0)--({2/5*sqrt(5)}, {-3/10*sqrt(5)})--({2/sqrt(5)},{1/sqrt(5)})--({9/10*sqrt(5)}, {17/50*sqrt(5)})--({12/25*sqrt(5)},{16/25*sqrt(5)});
\end{scope}
\end{tikzpicture}
\caption{The irregular tetrahedron and octahedron with $\Delta_1$ facets, unfolded into nets.}
\label{figure6d}
\end{figure}
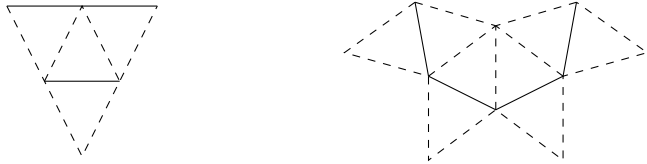

While the four-coloring construction reproduces the best packings known in up to eight dimensions, it does not work well in higher dimensions. For example, the Leech lattice in $\R^{24}$ cannot be obtained in this way. Given a valid four-colored point configuration in $\R^{21}$ with $\delta$ points per unit volume, the density of the resulting sphere packing in $\R^{24}$ is
\[
\frac{\pi^{12}}{12! \cdot 2^{13}} \, \delta.
\]
Matching the density of the Leech lattice would amount to achieving $\delta = 2^{13} = 8192$, which turns out to be impossible because each of the four colors gives a sphere packing with radius $\sqrt{2}/2$, and achieving $\delta=8192$ would require impossibly dense sphere packings. Specifically, the Blichfeldt bound 
implies that each color has at most $788.5785$ points per unit volume (see, for example, Theorem~6.1 in \cite{zong1999}), and four times that bound is still far below $8192$.

\section{Nine-dimensional kissing configurations}
\label{sec:9dkissing}

Nine dimensions is the first case beyond five dimensions in which we have been able to produce a new kissing configuration by modifying layers. We do not obtain a corresponding dense sphere packing, and indeed the densest known nine-dimensional sphere packings all have suboptimal kissing configurations.

The record kissing configuration of size $306$ in $\R^9$ was discovered by Leech and Sloane \cite{ls1971} in 1971, and it has been the only such configuration known since then \cite{cjkt2011}, up to isometry. It is obtained from a binary error-correcting code of block length~$9$, constant weight~$4$, and minimal distance~$4$. Such a code is unique~\cite{ostergard2010}, and we can construct it as follows.

It is convenient to write the nine coordinates in three rows of three. Then the code consists of $18$ codewords, obtained by arbitrarily permuting the rows and columns of
\[
\begin{bmatrix}
0 & 0 & 0\\
0 & 1 & 1\\
0 & 1 & 1
\end{bmatrix}
\qquad\text{and}\qquad
\begin{bmatrix}
0 & 1 & 1\\
1 & 0 & 0\\
1 & 0 & 0
\end{bmatrix}.
\]
The kissing configuration contains $2^4 \cdot 18 = 288$ points that place arbitrary signs on the entries of the codewords, together with the $18$ permutations of $(\pm 2, 0, \dots, 0)$, for a total of $306$ points. A simple case analysis shows that no two distinct codewords overlap in more than two nonzero coordinates, which is exactly what is needed to obtain a valid kissing configuration. 

Alternatively, we can construct the code using the finite field $\F_9$, with the nine coordinates indexed by the elements of $\F_9$. The codewords correspond to certain four-element subsets of $\F_9$, specifically the nonzero squares in $\F_9$, the non-squares in $\F_9$, and the translates of these sets by elements of $\F_9$. To see that this code is equivalent to the code from the previous paragraph, we can write $\F_9$ as $\F_3(i)$ with $i^2=-1$ and arrange the elements in a grid as
\[
\begin{bmatrix}
0 & 1 & 2\\
i & 1+i & 2+i\\
2i & 1+2i & 2+2i
\end{bmatrix};
\]
because $2 = i^2$ and $i = (2+i)^2$, the elements $1$, $2$, $i$, and $2i$ are the nonzero squares, and we obtain the same code as before. The construction using $\F_9$ is more abstract, but it lets us see that the code is invariant under the affine semilinear group, which consists of the functions $x \mapsto ax+b$ and $x \mapsto ax^3 + b$ with $a,b \in \F_9$ and $a \ne 0$. In fact, this group is the full automorphism group of the code. The automorphism group acts transitively on the codewords, since multiplying by a non-square interchanges the nonzero squares with the non-squares. By contrast, the subgroup generated by permuting the rows and columns and applying symmetries of the $3 \times 3$ square does not act transitively.

We will use the more concrete construction based on $3 \times 3$ squares of coordinates. In our calculations below, we will number the coordinates as
\[
\begin{bmatrix}
1 & 2 & 3\\
4 & 5 & 6\\
7 & 8 & 9
\end{bmatrix}.
\]
The binary code then contains the codewords shown in Table~\ref{table9dcode}.

\begin{table}
\caption{The binary code of block length~$9$, weight~$4$, and minimal distance~$4$.}\label{table9dcode}
\centering
\begin{tabular}{ccccccccc}
\toprule
1 & 1 & 0 & 1 & 1 & 0 & 0 & 0 & 0\\
1 & 1 & 0 & 0 & 0 & 1 & 0 & 0 & 1\\
1 & 1 & 0 & 0 & 0 & 0 & 1 & 1 & 0\\
1 & 0 & 1 & 1 & 0 & 1 & 0 & 0 & 0\\
1 & 0 & 1 & 0 & 1 & 0 & 0 & 1 & 0\\
1 & 0 & 1 & 0 & 0 & 0 & 1 & 0 & 1\\
1 & 0 & 0 & 1 & 0 & 0 & 0 & 1 & 1\\
1 & 0 & 0 & 0 & 1 & 1 & 1 & 0 & 0\\
0 & 1 & 1 & 1 & 0 & 0 & 1 & 0 & 0\\
0 & 1 & 1 & 0 & 1 & 1 & 0 & 0 & 0\\
0 & 1 & 1 & 0 & 0 & 0 & 0 & 1 & 1\\
0 & 1 & 0 & 1 & 0 & 1 & 0 & 1 & 0\\
0 & 1 & 0 & 0 & 1 & 0 & 1 & 0 & 1\\
0 & 0 & 1 & 1 & 1 & 0 & 0 & 0 & 1\\
0 & 0 & 1 & 0 & 0 & 1 & 1 & 1 & 0\\
0 & 0 & 0 & 1 & 1 & 0 & 1 & 1 & 0\\
0 & 0 & 0 & 1 & 0 & 1 & 1 & 0 & 1\\
0 & 0 & 0 & 0 & 1 & 1 & 0 & 1 & 1\\
\bottomrule
\end{tabular}
\end{table}

If we partition the kissing configuration into layers based on the first coordinate, we obtain layers of sizes $1$, $64$, $176$, $64$, and $1$ at heights $2$, $1$, $0$, $-1$, and $-2$, respectively. To create a new kissing configuration, we replace the layer that has first coordinate~$1$ with a modification. This mutation of an eight-dimensional layer is similar to how $L_5$, $Q_5$, and $R_5$ were constructed by modifying one four-dimensional layer of another five-dimensional kissing configuration, or to Figure~\ref{figure3d}. 

Each point in the eight-dimensional layer to be modified has a $1$ in the first coordinate and three $\pm 1$'s in one of eight triples from among the remaining coordinates. One can check that there are exactly eight other triples that could be used instead while keeping the other layers unchanged, and we define the new kissing configuration by using them in this layer.\footnote{Among the sixteen triples that are consistent with the central cross section, one can check that the original eight and their complement are the only sets of eight that avoid overlap in more than one coordinate, which would lead to overly large inner products.} Another description is that in the modified layer, we swap coordinate~$2$ with coordinate~$3$ and coordinate~$4$ with coordinate~$7$. The modified points with first coordinate~$1$ are therefore based on Table~\ref{table9dmodified}, rather than Table~\ref{table9dcode}, by putting arbitrary signs on every coordinate except the first.\footnote{Coordinates for both kissing configurations are available through DSpace@MIT at \url{https://hdl.handle.net/1721.1/157699}.}

\begin{table}
\caption{The modified codewords.}\label{table9dmodified}
\centering
\begin{tabular}{ccccccccc}
\toprule
1 & 0 & 1 & 0 & 1 & 0 & 1 & 0 & 0\\
1 & 0 & 1 & 0 & 0 & 1 & 0 & 0 & 1\\
1 & 0 & 1 & 1 & 0 & 0 & 0 & 1 & 0\\
1 & 1 & 0 & 0 & 0 & 1 & 1 & 0 & 0\\
1 & 1 & 0 & 0 & 1 & 0 & 0 & 1 & 0\\
1 & 1 & 0 & 1 & 0 & 0 & 0 & 0 & 1\\
1 & 0 & 0 & 0 & 0 & 0 & 1 & 1 & 1\\
1 & 0 & 0 & 1 & 1 & 1 & 0 & 0 & 0\\
\bottomrule
\end{tabular}
\end{table}

\begin{table}
\caption{The counts of inner products in the previously known kissing configuration (first row) and the new configuration (second row), as in Table~\ref{5dinnerproducts}.}
\label{9dinnerproducts}
\centering
\begin{tabular}{ccccccc}
\toprule
$-1$ & $-3/4$ & $-1/2$ & $-1/4$ & $0$ & $1/4$ & $1/2$\\
\midrule
$153$ & $0$ & $8640$ & $4608$ & $20016$ & $4608$ & $8640$ \\
$89$ & $384$ & $7680$ & $5888$ & $19056$ & $4992$ & $8576$\\
\bottomrule
\end{tabular}
\end{table}

The resulting kissing configuration is not isometric to the original, because we have broken the antipodal symmetry, and we obtain the following theorem: 

\begin{theorem}
There are at least two non-isometric kissing configurations of $306$ points in nine dimensions.
\end{theorem}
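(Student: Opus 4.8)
The plan is to establish two facts: that the modified point set is a genuine kissing configuration of $306$ points, and that it is not isometric to the Leech--Sloane configuration. Throughout I normalize so that all $306$ points have squared norm~$4$; then the kissing condition is exactly that every pair of distinct points has inner product at most~$2$, and the configuration described above is valid by hypothesis. Since the modification replaces the $64$ points of the layer at height~$1$ by applying the coordinate permutation $(2\,3)(4\,7)$ to their underlying codewords, the new set again has exactly $306$ points, and it suffices to re-examine only those inner products involving a modified point.

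For validity I would run through the possible locations of the second point. A pair of modified points has inner product governed by the overlap of their two permuted triples in coordinates~$2$ through~$9$; because a permutation preserves pairwise overlaps, these overlaps are unchanged from the original layer and hence yield inner products at most~$2$. Against the two extreme points $(\pm 2,0,\dots,0)$ and against the central points that are permutations of $(\pm 2,0,\dots,0)$, the inner products are unchanged in magnitude, equal to $\pm 2$ or~$0$. Against a point in the unchanged layer at height~$-1$, the two nonzero first coordinates contribute $-1$, while coordinates~$2$ through~$9$ contribute at most the overlap of two weight-three supports, namely~$3$, for a total of at most~$2$. The one genuinely new case is a modified height-$1$ codeword against a central codeword (those with first coordinate~$0$, whose four nonzero entries all lie in coordinates~$2$ through~$9$): here I must check that each permuted triple meets the support of every such codeword in at most two coordinates. \textbf{This last check is the crux of the argument and the main obstacle}, since it is the only constraint not forced automatically by the permutation structure; I would verify it by the finite case analysis recorded in the footnoted computer code. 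Granting it, every pairwise inner product is at most~$2$, so the new set is a valid kissing configuration.

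Finally I would prove non-isometry by an invariant. Each configuration spans $\R^9$ and lies on the sphere of radius~$2$ about the origin, so any isometry carrying one to the other must fix the origin; it is therefore an orthogonal linear map and preserves the multiset of pairwise inner products. In particular the number of \emph{antipodal} pairs --- pairs of points with inner product $-4$, equivalently normalized inner product $-1$ --- is an isometry invariant. The Leech--Sloane configuration is centrally symmetric, so it has $306/2 = 153$ antipodal pairs. In the new configuration the collection of modified triples is disjoint from the collection of original triples, so no point of the modified layer at height~$1$ is the negative of a point of the (unchanged) layer at height~$-1$; consequently none of the $128$ points at heights $\pm 1$ has an antipode, and the antipodal pairs come only from the centrally symmetric central layer ($176/2 = 88$ pairs) together with the extreme pair $(\pm 2,0,\dots,0)$ ($1$ pair), giving $89$ in total. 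Since $89 \neq 153$, the two configurations are not isometric, which proves the theorem.
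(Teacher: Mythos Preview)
Your proof is correct and follows essentially the same approach as the paper: validity of the modified configuration is reduced to the finite check that each new triple meets every central-layer support in at most two coordinates (exactly what the paper asserts and defers to computation), and non-isometry is established by the antipodal-pair count $153$ versus $89$, which the paper records in Table~\ref{9dinnerproducts}. Your write-up is in fact more explicit than the paper's, since you spell out why the other inner-product cases are automatic and derive the $89 = 88 + 1$ count directly from the disjointness of the modified and original triples rather than just tabulating it.
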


More generally, Table~\ref{9dinnerproducts} compares the inner product counts for the two configurations. Note that there are fewer pairs of distinct points at the minimal distance (equivalently, maximal inner product) in the modified configuration. We can interpret this fact as follows. Suppose we wish to arrange $306$ points $v_1,\dots,v_{306}$ in $\R^9$ so that they are as far from each other as possible while remaining confined to a sphere. One method is to minimize the Riesz energy
\[
\sum_{1 \le i < j \le 306} \frac{1}{|v_i-v_j|^s}
\]
for large $s$; the global optimum for energy will achieve the largest possible minimal distance in the limit as $s \to \infty$. Because our modified nine-dimensional kissing configuration has fewer pairs of points at the minimal distance, it has lower energy than the original for all sufficiently large $s$. Similarly, Table~\ref{5dinnerproducts} shows that $L_5$ has lower energy than $D_5$, $Q_5$, or $R_5$ when $s$ is large, although that comparison requires examining non-minimal distances.

Another way to distinguish these configurations is their symmetry groups. The original kissing configuration has $73728$ symmetries, namely the semidirect product of the $144$ coordinate permutations preserving the binary code with $2^9$ sign changes, while the modified configuration has only $8192$ symmetries. The group of order $8192$ is generated by the $16$ coordinate permutations (among the $144$) that fix the first coordinate, the $2^8$ sign changes that fix the first coordinate, and the map
\[
(x_1,\dots,x_9) \mapsto (-x_1,x_3,x_2,x_7,x_5,x_6,x_4,x_8,x_9)
\]
that changes the sign of the first coordinate while swapping coordinate~$2$ with coordinate~$3$ and coordinate~$4$ with coordinate~$7$.

\section*{Acknowledgements}

We are grateful to Yoav Kallus for pointing out that the kissing configuration identified by Sz\"oll\H{o}si in \cite{szollosi2023} had previously occurred as the kissing configuration of a packing discovered in \cite{ak2020}, and to the referees for helpful comments.

\end{document}